\numberwithin{equation}{section}
\DeclareSIUnit\permille{\text{\textperthousand}}
\declaretheoremstyle[
  bodyfont=\normalfont\itshape,
  headformat=\NAME\ \NUMBER\NOTE,
]{myplain}
\declaretheoremstyle[
  headformat=\NAME\ \NUMBER\NOTE,
]{mydefinition}
\newcommand{\envqed}{{\lower-0.3ex\hbox{$\triangleleft$}}}
\declaretheorem[style=myplain,numberwithin=section]{theorem}
\declaretheorem[style=myplain,numberlike=theorem]{lemma}
\declaretheorem[style=mydefinition,numberlike=theorem,qed=\envqed]{remark}
\declaretheorem[style=mydefinition,numberlike=theorem,qed=\envqed]{example}
\newcommand{\R}{\mathbb{R}}
\renewcommand{\O}{\mathcal{O}}
\newcommand{\dt}{{\Delta t}}
\renewcommand{\rho}{\varrho}
\renewcommand{\epsilon}{\varepsilon}
\newcommand{\gammalocal}{\gamma_\kappa}
\newcommand{\etalocal}{\eta_\kappa}
\newcommand{\told}{t^{\mathrm{old}}}
\newcommand{\uold}{u^{\mathrm{old}}}
\newcommand{\etaold}{\eta^{\mathrm{old}}}
\newcommand{\etaoldlocal}{\etalocal^{\mathrm{old}}}
\newcommand{\tnew}{t^{\mathrm{new}}}
\newcommand{\unew}{u^{\mathrm{new}}}
\newcommand{\etanew}{\eta^{\mathrm{new}}}
\newcommand{\etanewlocal}{\etalocal^{\mathrm{new}}}
\newcommand{\tgamma}{t^{n}_\gamma}
\newcommand{\ugamma}{u^{n}_\gamma}
\newcommand{\tgammalocal}{t^{n}_{\gammalocal}}
\newcommand{\ugammalocal}{u^{n}_{\gammalocal}}
\newcommand{\Tr}{\ensuremath{^{\mr{T}}}}
\newcommand{\mr}[1]{\ensuremath{\mathrm{#1}}}
\newcommand{\fnc}[1]{\ensuremath{\mathcal{#1}}}
\newcommand{\mat}[1]{\ensuremath{\mathsf{#1}}}
\newcommand{\ie}[0]{{i.e.\@}\xspace}
\newcommand{\Th}[0]{\ensuremath{^{\mathrm{th}}}}
\newcommand{\xm}[1]{\ensuremath{x_{#1}}}
\newcommand{\Um}[1]{\ensuremath{\fnc{U}_{#1}}}
\newcommand{\E}[0]{\ensuremath{\fnc{E}}}
\newcommand{\sph}[0]{\ensuremath{s}}
\newcommand{\DxiloneD}[1]{\ensuremath{\mat{D}_{\xi}}}
\newcommand{\PxiloneD}[1]{\ensuremath{\mat{P}_{\xi}}}
\newcommand{\QxiloneD}[1]{\ensuremath{\mat{Q}_{\xi}}}
\newcommand{\SxiloneD}[1]{\ensuremath{\mat{S}_{\xi}}}
\newcommand{\ExiloneD}[1]{\ensuremath{\mat{E}_{\xi}}}
\newcommand{\txilalpha}[1]{\ensuremath{\bm{t}_{\alpha}}}
\newcommand{\txilbeta}[1]{\ensuremath{\bm{t}_{\beta}}}
\newcommand{\matFxm}[3]{\ensuremath{\mat{F}_{\xm{#1}}\left(#2,#3\right)}}
\newcommand{\qk}[1]{\ensuremath{\bm{q}_{#1}}}
\newcommand{\ones}[1]{\ensuremath{\bm{1}_{#1}}}
\newcommand{\Ok}[0]{\ensuremath{\Omega_{\kappa}}}
\newcommand{\Cij}[2]{\ensuremath{\mat{C}_{#1,#2}}}
\newcommand{\thetaa}[1]{\ensuremath{\bm{\theta}_{#1}}}
\newcommand{\Rf}[1]{\ensuremath{\mathrm{R}_{#1}}}
\newcommand{\matJRftilde}[1]{\ensuremath{\tilde{\mat{J}}_{\Rf{f}}}}
\DeclareMathAlphabet{\mathdutchcal}{U}{dutchcal}{m}{n}
\SetMathAlphabet{\mathdutchcal}{bold}{U}{dutchcal}{b}{n}
\DeclareMathAlphabet{\mathdutchbcal}{U}{dutchcal}{b}{n}
\newcommand{\equivN}[0]{\equiv}
\newcommand{\fncd}[1]{\ensuremath{\mathdutchcal{#1}}}
\newcommand{\bfncd}[1]{\ensuremath{\bm{\mathdutchcal{#1}}}}
\newcommand{\Qnew}[0]{\ensuremath{\bfncd{q}}}
\newcommand{\Wnew}[0]{\ensuremath{\bfncd{w}}}
\newcommand{\FxmInew}[1]{\ensuremath{\bfncd{f}_{\xm{#1}}^{I}}}
\newcommand{\FxmVnew}[1]{\ensuremath{\bfncd{f}_{\xm{#1}}^{V}}}
\newcommand{\GBnew}[0]{\ensuremath{\bfncd{g}^{(B)}}}
\newcommand{\Gzeronew}[0]{\ensuremath{\bfncd{g}^{(0)}}}
\newcommand{\Snew}[0]{\ensuremath{\fncd{s}}}
\newcommand{\Fxmnew}[1]{\ensuremath{\fncd{F}_{\xm{#1}}}}
\newcommand{\matCmj}[2]{\ensuremath{\left[\mat{C}_{#1,#2}\right]}}
\newcommand{\DxmI}[2]{\ensuremath{\mat{D}_{\xm{#1}}^{I,#2}}}
\newcommand{\DxmVone}[2]{\ensuremath{\mat{D}_{\xm{#1}}^{V_{1},#2}}}
\newcommand{\Pmatvol}[0]{\ensuremath{\mat{P}}}
\newenvironment{keywords}{\par\textbf{Key words.}}{\par}
\newenvironment{AMS}{\par\textbf{AMS subject classification.}}{\par}
\title{Fully-Discrete Explicit Locally Entropy-Stable Schemes for the Compressible Euler and Navier--Stokes Equations}
\author{Hendrik Ranocha, Lisandro Dalcin, Matteo Parsani}
\date{June 16, 2020} 
\begin{document}

\maketitle

\begin{abstract}
  Recently, relaxation methods have been developed to guarantee the preservation
of a single global functional of the solution of an ordinary differential
equation. Here, we generalize this approach to guarantee local entropy inequalities for finitely
many convex functionals (entropies) and apply the resulting methods to the
compressible Euler and Navier--Stokes equations. Based on the unstructured 
$hp$-adaptive SSDC framework of
entropy conservative or dissipative semidiscretizations using summation-by-parts
and simultaneous-approximation-term operators,
we develop the first discretizations
for compressible computational fluid dynamics
that are primary conservative, locally entropy stable in the fully discrete sense
under a usual CFL condition, explicit except for the parallelizable solution of
a single scalar equation per element, and arbitrarily high-order accurate
in space and time. We demonstrate the accuracy and the robustness of the fully-discrete explicit locally
entropy-stable solver for a set of test cases of increasing complexity.

\end{abstract}

\begin{keywords}
  entropy stability,
  relaxation methods,
  $hp$-adaptive spatial discretizations,
  compressible Euler equations,
  compressible Navier--Stokes equations,
  conservation laws
\end{keywords}

\begin{AMS}
  65M12,  
  65M70,  
  65L06,  
  65L20,  
  65P10,  
  76M10,  
  76M20,  
  76M22,  
  76N99  
\end{AMS}

\section{Introduction}
\label{sec:introduction}

Consider an ordinary differential equation (ODE)
\begin{equation}
\label{eq:ode}
  u'(t)
  =
  f(u(t))
  \quad
  u(0) = u^{0},
\end{equation}
in a Banach space.
Throughout the paper, we use upper indices to denote the index of the corresponding time step.
In various applications in science and engineering, there are often smooth
energy/entropy/Lyapunov
functionals, $\eta$, whose evolution in time is important, e.g., to provide
some stability estimates \cite[Chapter~5]{dafermos_book_2010}.
Often, the time derivative of $\eta$ satisfies
\begin{subequations}
\label{eq:ode-dissiaptive}
\begin{gather}
  \od{}{t} \eta(u(t)) \leq 0\\
\intertext{for all solutions $u$ of \eqref{eq:ode}, i.e.,}
  \forall u\colon \quad \eta'(u) f(u) \leq 0.
\end{gather}
\end{subequations}
Problem \eqref{eq:ode} is said to be \emph{dissipative} if
\eqref{eq:ode-dissiaptive} holds and \emph{conservative} if there is
an equality in \eqref{eq:ode-dissiaptive}.
However, the relaxation approach
described in the following is not limited to conservative or dissipative
problems. Instead, this approach can also be applied successfully if an estimate of
the entropy $\eta$ is available and its discrete preservation is desired.

To transfer the stability results imposed by the functional $\eta$
to the discrete level, it is desirable to enforce an analogous dissipation
property discretely. For a $k$-step method computing $\unew$ from
the previous solution values $u^{n-1}, \dots, u^{n-k}$, \cite{butcher2016numerical}, we thus require
\begin{equation}
\label{eq:discrete-dissipation}
  \eta(\unew) \le \max\{ \eta(u^{n-1}), \eta(u^{n-2}), \dots, \eta(u^{n-k}) \}
\end{equation}
for dissipative problems. A numerical method satisfying this requirement is
also said to be dissipative (also known as monotone).
In the applications studied in this article, we will focus on one-step
methods, in particular on Runge--Kutta methods. However, the theory of local
relaxation methods is developed and presented for general $k$-step schemes.

Several approaches exist for enforcing discrete conservation or dissipation of a global
entropy $\eta$. Here, we focus on the idea of relaxation, which can be traced
back to \cite{sanz1982explicit, sanz1983method} and
\cite[pp. 265-266]{dekker1984stability} and has most recently been developed in
\cite{ketcheson2019relaxation, ranocha2020relaxation, ranocha2020general,
ranocha2020relaxationHamiltonian}.
The basic idea of relaxation methods can be applied to any numerical time
integration scheme. Given approximate solutions $u^{n-1} \approx u(t^{n-1}),
\dots, u^{n-k} \approx u(t^{n-k})$, a new numerical approximation
$\unew \approx u(\tnew)$ is first computed with a local error of order
$\dt^{p+1}$, where $p \geq 2$ is the formal order of accuracy of the scheme. This
new approximation might violate
\eqref{eq:discrete-dissipation}. Following the relaxation idea, a solution, $\ugamma$,
that fulfills
the dissipative condition \eqref{eq:discrete-dissipation} is constructed using a
line search along the (approximate) secant
line connecting $\unew$ and the convex combination
\begin{equation}
  \uold = \sum_{i=0}^{m-1} \nu_i u^{n-m+i},
  \quad
  \told = \sum_{i=0}^{m-1} \nu_i t^{n-m+i},
\end{equation}
of previous solution values, where $m \geq 1$ is arbitrary but
fixed. Therefore, this solution takes the following form:
\begin{equation}
\label{eq:u-n-gamma}
  \ugamma = \uold + \gamma (\unew - \uold).
\end{equation}
Under somewhat general assumptions \cite{ranocha2020general}, there is
always a positive value of the parameter $\gamma$
that guarantees \eqref{eq:discrete-dissipation} and is very close to unity,
so that $\ugamma$ approximates $u(\tgamma)$, where
\begin{equation}
\label{eq:t-n-gamma}
  \tgamma = \told + \gamma (\tnew - \told),
\end{equation}
to the same order of accuracy as the original approximate solution, $\unew$.

In this article, we extend this global relaxation framework to a local framework.
Instead of considering the evolution of a single global functional $\eta$,
we have a decomposition of $\eta$ into a sum of finitely many convex local
entropies $\eta_\kappa$, i.e.,\ $\eta = \sum_\kappa \eta_\kappa$.
Under similarly general assumptions as those for the global relaxation methods, we will
show that there is a relaxation parameter, $\gamma$, close to unity such that
the evolution of all local entropies can be estimated for the relaxed
solution, $\ugamma$.

\subsection{Related work}

There are several semidiscretely entropy conservative or dissipative
numerical methods for the compressible Euler and Navier--Stokes equations
\cite{tadmor2003entropy, lefloch2002fully, fisher2013high, svard2014entropy,
parsani2015entropy, carpenter2016towards, ranocha2018thesis,
ranocha2018comparison, sjogreen2018high, friedrich2018entropyHP,
chan2018discretely, fernandez_entropy_stable_p_euler_2019,
fernandez_entropy_stable_p_ns_2019, pazner_es_line_dg_2019, hicken_sbp_2020,
abgrall2019reinterpretation}.
However, transferring such semidiscrete results to fully
discrete schemes is not easy in general. Stability/dissipation results for
fully discrete schemes have mainly been limited to semidiscretizations
including certain amounts of dissipation \cite{higueras2005monotonicity,
zakerzadeh2016high, ranocha2018stability, jungel2017entropy},
linear equations \cite{tadmor2002semidiscrete, ranocha2018L2stability,
sun2017stability, sun2019strong, ranocha2020class}, or fully implicit time integration schemes
\cite{lefloch2002fully, friedrich2019entropy, boom2015high, nordstrom2019energy,
ranocha2019some, burrage1979stability, burrage1980nonlinear}.
For explicit methods and general equations, there are negative experimental
and theoretical results concerning energy/entropy stability
\cite{ranocha2020strong, ranocha2019energy, lozano2018entropy, lozano2019entropy}.

To circumvent the limitations of standard time integration schemes, several
general methods have been proposed for conservative or dissipative ODEs such as orthogonal
projection \cite[Section~IV.4]{hairer2006geometric} and relaxation
\cite{ketcheson2019relaxation, ranocha2020relaxation, ranocha2020general}, 
as well as more problem-dependent methods for dissipative ODEs, such as
artificial dissipation or filtering \cite{sun2019enforcing, offner2019analysis,
offner2018artificial}, mainly in the context of one-step
methods.
For fluid mechanics applications, orthogonal projection methods are not
really suitable since they do not preserve linear invariants such as the
total mass, \cite{hairer2006geometric}. In contrast, relaxation methods conserve all linear invariants
and can still preserve the correct global entropy conservation/dissipation
in time \cite{ketcheson2019relaxation, ranocha2020relaxation, ranocha2020general,
ranocha2020relaxationHamiltonian}.

\subsection{Outline of the article}

This article is structured as follows. In Section~\ref{sec:local-relaxation},
we develop the concept of local relaxation time integration methods in an
abstract ODE setting.
Afterwards, in Section~\ref{sec:semidiscretization},
we briefly introduce the compressible Euler and Navier--Stokes equations
and their spatially entropy-conservative or entropy-stable semidiscretization. The building blocks of the
spatial discretization are the discontinuous collocation schemes of any order
constructed using the framework of summation-by-parts (SBP) and simultaneous-approximation-term
(SAT) operators
\cite{parsani_ssdc_jcp_2020,fernandez_entropy_stable_p_ref_nasa_2019}.
The abstract framework of local relaxation methods is then specialized to these
entropy conservative/dissipative semidiscretizations.
Next, in Section~\ref{sec:numerical-experiments}, we present six numerical test
cases of increasing complexity that allow to verify our theoretical results.
Finally, we summarize the developments and conclude in Section~\ref{sec:summary}.

\section{Local relaxation methods}
\label{sec:local-relaxation}

Consider the ODE \eqref{eq:ode} and finitely many local entropies, $\eta_\kappa$,
which are assumed to be smooth convex functionals. While these local entropies
are not necessarily dissipated, their sum, $\eta = \sum_\kappa \eta_\kappa$, can
be dissipated, e.g., because there is some exchange between different local
entropies.

\begin{example}
\label{ex:conservation-laws}
  For scalar or systems of hyperbolic or hyperbolic-parabolic conservation laws, these local
  entropies correspond to discrete versions of
  \begin{equation}
    \eta_\kappa(u) = \int_{\Omega_\kappa} \Snew(u),
  \end{equation}
  where $\Snew$ is the entropy function and the domain, $\Omega$, is divided
  into non-overlapping sub-domains, $\Omega_\kappa$. Hence, the global entropy is
  a discrete version of
  \begin{equation}
    \eta(u)
    =
    \sum_\kappa \eta_\kappa(u)
    =
    \sum_\kappa \int_{\Omega_\kappa} \Snew(u)
    =
    \int_{\Omega} \Snew(u).
  \end{equation}
  Since there is an exchange of entropy between the sub-domains, $\Omega_\kappa$,
  the local entropies, $\eta_\kappa$, can also grow in time. However, their sum is
  dissipated for entropy dissipative numerical methods with suitable boundary
  conditions.
\end{example}

\begin{remark}
\label{rem:finitely-man-entropies}
  The theory developed in the following does not depend on the interpretation
  of $\eta_\kappa$ as local entropies that sum up to a global entropy $\eta$
  as in Example~\ref{ex:conservation-laws}. Instead, $\eta_\kappa$ can also
  be finitely many (local and/or global) convex entropies. In particular,
  any finite number of convex entropies for the Euler equations
  \cite{harten1983symmetric} can be chosen if the spatial semidiscretizations
  are constructed adequately.
\end{remark}

Each local entropy satisfies the augmented ODE
\begin{equation}
  \od{}{t}
  \begin{pmatrix}
    t \\
    u(t) \\
    \eta_\kappa(u(t))
  \end{pmatrix}
  =
  \begin{pmatrix}
    1 \\
    f(u(t)) \\
    (\eta_\kappa' f)(u(t))
  \end{pmatrix}.
\end{equation}
A generic numerical time integration method of order $p$ yields
an approximation $u(\tnew)$ to the
analytical solution at time $\tnew$ of the form
$\unew = u(\tnew) + \O( \dt^{p+1} )$.
However, given $\unew$, it might not be possible to estimate $\eta_\kappa(\unew)$ directly.
Nevertheless, if we can construct an estimate
\begin{equation}
  \etanew_\kappa
  =
  \eta_\kappa(u(\tnew)) + \O( \dt^{p+1} )
\end{equation}
that guarantees the desired evolution of $\eta_\kappa$, then we can use the relaxation
approach to enforce it. This is achieved by introducing the
relaxation parameter, $\gammalocal$, a fixed integer offset $m \geq 1$,
and a convex combination of old solution values
\begin{equation}
\label{eq:old-values}
  \begin{pmatrix}
    \told \\
    \uold \\
    \etaoldlocal
  \end{pmatrix}
  =
  \sum_{i=0}^{m-1} \nu_i
  \begin{pmatrix}
    t^{n-m+i} \\
    u^{n-m+i} \\
    \eta_\kappa(u^{n-m+i})
  \end{pmatrix},
\end{equation}
and setting
\begin{equation}
\label{eq:relaxation-local-gamma}
  \begin{pmatrix}
    \tgammalocal \\
    \ugammalocal \\
    \eta_\kappa(\ugammalocal)
  \end{pmatrix}
  =
  \begin{pmatrix}
    \told \\
    \uold \\
    \etaoldlocal
  \end{pmatrix}
  + \gammalocal
  \begin{pmatrix}
    \tnew - \told \\
    \unew - \uold \\
    \etanewlocal - \etaoldlocal
  \end{pmatrix}.
\end{equation}
To satisfy all three equalities \eqref{eq:relaxation-local-gamma}, we first
solve the last scalar equation for
$\gammalocal$. Then, we proceed with the numerical integration of the ODE by using
$\ugammalocal$ instead of $\unew$. Therefore, $\ugammalocal$ can be
interpreted as an approximation
at time $\tgammalocal$.

To guarantee the existence of a solution $\gammalocal$, we invoke
\cite[Lemma~2.10]{ranocha2020general}, which is also reproduced here for completeness.
\begin{lemma}
\label{lem:local-gamma}
  Assume that $\unew$ is computed using a time integration method of
  order $p \geq 2$.

  If $\eta_\kappa$ is a convex (local) entropy, $\dt$ is sufficiently small,
  and $\eta_\kappa''(\uold)(f(\uold), f(\uold)) \neq 0$,
  then there is a unique $\gammalocal > 0$ such that the last equality in
  \eqref{eq:relaxation-local-gamma} is satisfied.
  This $\gammalocal$ satisfies $\gammalocal = 1 + \O( \dt^{p-1} )$.
\end{lemma}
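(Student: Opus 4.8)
The plan is to reduce the last equality in \eqref{eq:relaxation-local-gamma} to a scalar root-finding problem in $\gammalocal$ and apply a standard implicit-function / Taylor-expansion argument around $\gammalocal = 1$. Write
\[
  r(\gamma)
  :=
  \etaoldlocal + \gamma\,(\etanewlocal - \etaoldlocal) - \eta_\kappa\bigl(\uold + \gamma(\unew - \uold)\bigr),
\]
so that the existence of a suitable relaxation parameter is equivalent to the existence of a root $\gammalocal > 0$ of $r$. Note $r(0) = 0$ trivially, so the issue is to locate a \emph{second} root near $1$. The natural device is to factor out the trivial root: since $r(0)=0$, write $r(\gamma) = \gamma\, \tilde r(\gamma)$ with $\tilde r$ smooth, and look for a root of $\tilde r$. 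First I would Taylor-expand $\eta_\kappa(\uold + \gamma(\unew-\uold))$ in $\gamma$ about $\gamma = 0$ to second order, using convexity (hence $C^2$ regularity) of $\eta_\kappa$; the quadratic term carries the factor $\tfrac12\gamma^2 \eta_\kappa''(\uold)(\unew - \uold, \unew - \uold)$.

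The key computational input is the order-$p$ accuracy of the underlying scheme, $p \ge 2$: it gives $\unew - \uold = \dt\, f(\uold) + \O(\dt^2)$ and, crucially, the consistency of the entropy estimate $\etanewlocal - \etaoldlocal = \dt\, \eta_\kappa'(\uold) f(\uold) + \O(\dt^2)$, together with the fact that the first-order discrepancy between $\etanewlocal - \etaoldlocal$ and $\eta_\kappa(\unew) - \eta_\kappa(\uold)$ vanishes and the second-order discrepancy is controlled — this is where $p \ge 2$ enters, ensuring the $\O(\dt^2)$-term in $\tilde r$ is genuinely the leading nontrivial contribution. Carrying the expansion one order further, one finds
\[
  \tilde r(\gamma)
  =
  \tfrac12\,\dt^2\,(\gamma - 1)\,\eta_\kappa''(\uold)\bigl(f(\uold), f(\uold)\bigr)
  + \O(\dt^3) + \O\bigl(\dt^2 (\gamma-1)^2\bigr),
\]
so that $\tilde r(1) = \O(\dt^{p+1})$ (or at least $\O(\dt^3)$), while $\tilde r'(1) = \tfrac12 \dt^2 \eta_\kappa''(\uold)(f(\uold),f(\uold)) + \O(\dt^3)$, which is nonzero precisely by the hypothesis $\eta_\kappa''(\uold)(f(\uold),f(\uold)) \neq 0$ and $\dt$ small. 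An application of the implicit function theorem (or the quantitative Newton–Kantorovich estimate) in the single variable $\gamma$ then yields a unique root $\gammalocal$ in a shrinking neighborhood of $1$, of size $\O(\dt^{p-1})$: the numerator $\tilde r(1)$ is $\O(\dt^{p+1})$ and the derivative is $\Theta(\dt^2)$, whose ratio is $\O(\dt^{p-1})$. Positivity $\gammalocal > 0$ is then automatic for $\dt$ small since $\gammalocal = 1 + \O(\dt^{p-1})$.

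I expect the main obstacle to be the bookkeeping in the Taylor expansion — specifically, verifying that the order-$p$ accuracy assumption transfers correctly to the estimate $\etanewlocal$ so that the nontrivial part of $\tilde r$ has leading term exactly $\Theta(\dt^2)$ with the stated coefficient, and that all remainder terms are uniformly $\O(\dt^3)$ on the relevant $\gamma$-interval. The convexity of $\eta_\kappa$ is used only softly (for $C^2$ regularity and to make $\eta_\kappa''$ a genuine bilinear form), so the real content is the nondegeneracy condition $\eta_\kappa''(\uold)(f(\uold),f(\uold)) \neq 0$, which prevents the would-be leading coefficient from vanishing. Since this lemma is quoted from \cite[Lemma~2.10]{ranocha2020general}, the cleanest route is to observe that the argument there is \emph{entrywise local} — it never uses that $\eta$ is a sum of the $\eta_\kappa$ nor any global structure — so it applies verbatim to each fixed $\eta_\kappa$; I would simply cite that reference and sketch the factorization-and-IFT skeleton above for completeness.
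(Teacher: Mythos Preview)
The paper gives no proof of this lemma; it is simply quoted from \cite[Lemma~2.10]{ranocha2020general}. Your final recommendation---cite that reference and note that its argument is local to each fixed $\eta_\kappa$---is exactly what the paper does, and your factorization/IFT sketch is essentially the argument of that reference.

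One minor caveat in your sketch: the claim ``$r(0)=0$ trivially'' holds only for $m=1$. For $m>1$ one has $\etaoldlocal=\sum_i \nu_i\,\eta_\kappa(u^{n-m+i})$, while $\eta_\kappa(\uold)=\eta_\kappa\bigl(\sum_i \nu_i u^{n-m+i}\bigr)$, and these differ by an $\O(\dt^2)$ Jensen gap, so the factorization $r(\gamma)=\gamma\,\tilde r(\gamma)$ is not available. This is harmless: one works directly with $r$, whose concavity (from convexity of $\eta_\kappa$) yields at most one positive root, and the same estimates $r(1)=\etanewlocal-\eta_\kappa(\unew)=\O(\dt^{p+1})$ and $r'(1)=-\tfrac12\,\dt^2\,\eta_\kappa''(\uold)(f(\uold),f(\uold))+\O(\dt^3)$ locate that root at $1+\O(\dt^{p-1})$. (Your leading coefficient in $\tilde r$ also carries the opposite sign, but this is inconsequential.)
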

Here, $\eta_\kappa''(\uold)(f(\uold), f(\uold))$ is the Hessian of $\eta$,
evaluated at $\uold$ and contracted twice with $f(\uold)$.
Using the approximation property $\gammalocal = 1 + \O( \dt^{p-1} )$,
we can attain a high-order accurate $\ugammalocal$. This can be summarized
with the following lemma, which can be found in
\cite[Lemma~2.8]{ranocha2020general}.
\begin{lemma}
\label{eq:accuracy-u}
  Consider a relaxation method \eqref{eq:u-n-gamma} \& \eqref{eq:t-n-gamma}
  based on a time integration method of order $p \geq 2$.
  If $\gamma = 1 + \O( \dt^{p-1} )$, the relaxation method is of
  order $p$.
\end{lemma}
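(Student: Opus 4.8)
The plan is to show that the relaxed approximation $\ugammalocal$ (equivalently $\ugamma$ in the notation of \eqref{eq:u-n-gamma}) retains the formal order $p$ of the underlying time integration scheme, given only the assumption $\gamma = 1 + \O(\dt^{p-1})$. Since we may treat the augmented ODE for $(t,u)$ (ignoring the $\eta_\kappa$ component, which is slaved), the statement is really about the pair $(\tgamma, \ugamma)$ defined by \eqref{eq:u-n-gamma} and \eqref{eq:t-n-gamma}. First I would fix the reference: by hypothesis $\unew = u(\tnew) + \O(\dt^{p+1})$, and $\uold = \sum_{i=0}^{m-1}\nu_i u^{n-m+i}$ is a convex combination of previous exact-up-to-$\O(\dt^{p+1})$ solution values at times $t^{n-m+i}$, with $\told = \sum_i \nu_i t^{n-m+i}$. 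The key preliminary fact is that $\uold = u(\told) + \O(\dt^2)$: a convex combination of points on a smooth curve agrees with the curve evaluated at the averaged parameter up to second order (a Taylor expansion of $u$ about $\told$, using $\sum_i \nu_i (t^{n-m+i} - \told) = 0$). Likewise $\unew - \uold = u'(\told)(\tnew - \told) + \O(\dt^2) = f(u(\told))(\tnew-\told) + \O(\dt^2)$, so the secant direction is a first-order-accurate velocity, and $\tnew - \told = \O(\dt)$.

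Next I would insert the expansion of $\gamma$ directly into \eqref{eq:u-n-gamma}. Write $\gamma = 1 + \delta$ with $\delta = \O(\dt^{p-1})$. Then
\begin{equation*}
  \ugamma
  = \uold + \gamma(\unew - \uold)
  = \unew + \delta\,(\unew - \uold)
  = \unew + \O(\dt^{p-1})\,\O(\dt)
  = \unew + \O(\dt^{p}),
\end{equation*}
where I used $\unew - \uold = \O(\dt)$ from the previous step. Similarly $\tgamma = \tnew + \delta(\tnew - \told) = \tnew + \O(\dt^{p})$. So $\ugamma$ differs from $\unew$ by $\O(\dt^p)$ in solution value and is an approximation at a time $\tgamma$ differing from $\tnew$ by $\O(\dt^p)$.

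The remaining step is to compare $\ugamma$ at time $\tgamma$ with the exact solution $u(\tgamma)$, i.e.\ to bound the local error of the relaxed step measured against the shifted time. Using Lipschitz/smoothness of $u$ on the relevant interval,
\begin{equation*}
  \ugamma - u(\tgamma)
  = \bigl(\ugamma - \unew\bigr) + \bigl(\unew - u(\tnew)\bigr) + \bigl(u(\tnew) - u(\tgamma)\bigr)
  = \O(\dt^{p}) + \O(\dt^{p+1}) + \O(|\tnew - \tgamma|)
  = \O(\dt^{p}),
\end{equation*}
so the relaxed method has a local error of order $\dt^p$ and is therefore of order $p-1$ in the classical sense — but one gains a full order back because the time increment $\tgamma - \told$ is itself $\O(\dt)$ while the defect is $\O(\dt^p)$; dividing the accumulated defect by the step size, or invoking the standard Lady Windermere's fan argument for the global error over $\O(1/\dt)$ steps, gives global order $p$. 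To make this last part rigorous I would note that the relaxed scheme is equivalent to a consistent one-step method advancing time by $\tgamma - \told$ with increment function agreeing with the base method's up to $\O(\dt^p)$, and cite the usual convergence theorem (e.g.\ from \cite{butcher2016numerical} or \cite{hairer2006geometric}) that such a perturbation preserves order $p$; since the reference \cite[Lemma~2.8]{ranocha2020general} is exactly this statement, I would simply defer the mechanical convergence bookkeeping to that source and present the defect estimates above as the substance.

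The main obstacle is the bookkeeping around the nonuniform time grid: because $\gamma \neq 1$, the relaxed solution lives on a slightly shifted time level $\tgamma$, and one must be careful that the $\O(\dt^p)$ defect is measured consistently — against $u(\tgamma)$, not $u(\tnew)$ — and that over many steps the small time shifts do not accumulate adversely. This is handled by observing $\tgamma - \tnew = \O(\dt^p)$ so the cumulative shift over $\O(1/\dt)$ steps is $\O(\dt^{p-1}) \to 0$, and by the monotone-grid argument that the $\told, \unew$ pair already tracks the exact trajectory; still, this is the place where the proof requires care rather than routine estimation.
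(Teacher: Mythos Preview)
The paper does not prove this lemma itself; it simply cites \cite[Lemma~2.8]{ranocha2020general}. Your defect estimates are on the right track, but there is a genuine gap in the final step. Your triangle-inequality bound $\ugamma - u(\tgamma) = \O(\dt^{p}) + \O(\dt^{p+1}) + \O(|\tnew - \tgamma|) = \O(\dt^{p})$ is correct as an upper bound but not tight, and your recovery argument is wrong: if the one-step error really were only $\O(\dt^{p})$, then Lady Windermere's fan over $\O(1/\dt)$ steps yields a global error of $\O(\dt^{p-1})$, i.e.\ order $p-1$, not $p$. There is no mechanism by which ``dividing the accumulated defect by the step size'' recovers an order.

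What you are missing is a cancellation between the first and third terms of your decomposition. Both $\ugamma - \unew = (\gamma-1)(\unew - \uold)$ and $u(\tgamma) - u(\tnew)$ have the same leading part $(\gamma-1)\,u'(\tnew)\,(\tnew - \told)$: for the former use your own observation $\unew - \uold = u'(\tnew)(\tnew - \told) + \O(\dt^{2})$, and for the latter Taylor-expand $u$ at $\tnew$ using $\tgamma - \tnew = (\gamma-1)(\tnew - \told)$. Subtracting, the $\O(\dt^{p})$ terms cancel and what remains is $(\gamma-1)\cdot\O(\dt^{2}) + \O(\dt^{p+1}) + \O(\dt^{2p}) = \O(\dt^{p+1})$. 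Hence the local error measured at $\tgamma$ is genuinely $\O(\dt^{p+1})$, and the standard convergence argument then gives global order $p$ directly. This cancellation is precisely why interpreting $\ugamma$ as an approximation at the shifted time $\tgamma$ rather than at $\tnew$ is essential to the relaxation idea.
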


These ingredients can be combined to guarantee all local entropy inequalities
by using a relaxation method with relaxation parameter
$\gamma = \min_\kappa \gammalocal$.
\begin{theorem}
\label{thm:local-relaxation}
  Assume that $\unew$ is computed using a time integration method of
  order $p \geq 2$.

  If the $\eta_\kappa$ are finitely many convex local entropies,
  $\dt$ is sufficiently small, and
  \begin{equation}
    \forall \kappa\colon
    \quad
    \eta_\kappa''(\uold)(f(\uold), f(\uold)) \neq 0,
  \end{equation}
  then there are unique $\gammalocal > 0$ such that the last equality in
  \eqref{eq:relaxation-local-gamma} is satisfied for each $\kappa$.
  Using these $\gammalocal$, the relaxation method \eqref{eq:u-n-gamma}
  \& \eqref{eq:t-n-gamma} with $\gamma = \min_\kappa \gammalocal$ is
  of order $p$ and $\ugamma$ satisfies
  \begin{equation}
    \forall \kappa\colon
    \quad
    \eta_\kappa(u^{n}_\gamma)
    \leq
    \etaoldlocal + \gamma \bigl(
      \etanewlocal - \etaoldlocal
    \bigr).
  \end{equation}
\end{theorem}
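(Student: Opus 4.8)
The plan is to combine Lemma~\ref{lem:local-gamma} and Lemma~\ref{eq:accuracy-u} with a simple monotonicity argument for the line search. First I would apply Lemma~\ref{lem:local-gamma} to each $\kappa$ separately: since each $\eta_\kappa$ is a convex local entropy, $\dt$ is sufficiently small, and the nondegeneracy hypothesis $\eta_\kappa''(\uold)(f(\uold), f(\uold)) \neq 0$ holds for every $\kappa$, the lemma gives a unique $\gammalocal > 0$ solving the scalar relaxation equation (the last equality in \eqref{eq:relaxation-local-gamma}) for that $\kappa$, together with the estimate $\gammalocal = 1 + \O(\dt^{p-1})$. Because there are only finitely many $\kappa$, the minimum $\gamma = \min_\kappa \gammalocal$ is attained, is strictly positive, and also satisfies $\gamma = 1 + \O(\dt^{p-1})$ (the $\O$ constant being the worst over the finitely many $\kappa$). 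Feeding this into Lemma~\ref{eq:accuracy-u} immediately yields that the relaxation method \eqref{eq:u-n-gamma}~\&~\eqref{eq:t-n-gamma} with this $\gamma$ is of order $p$.

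It then remains to verify the entropy inequality $\eta_\kappa(\ugamma) \leq \etaoldlocal + \gamma(\etanewlocal - \etaoldlocal)$ for every $\kappa$. The key observation is that $\eta_\kappa$ is convex and is evaluated along the secant line $s \mapsto \uold + s(\unew - \uold)$, so the scalar function $\phi_\kappa(s) := \eta_\kappa(\uold + s(\unew - \uold))$ is convex in $s$. At $s = \gammalocal$ the relaxation construction \eqref{eq:relaxation-local-gamma} enforces equality, $\phi_\kappa(\gammalocal) = \etaoldlocal + \gammalocal(\etanewlocal - \etaoldlocal)$, i.e.\ the graph of $\phi_\kappa$ meets the chord through $(0, \etaoldlocal)$ and $(1, \etanewlocal)$ exactly at $s = \gammalocal$. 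By convexity of $\phi_\kappa$, the difference $\psi_\kappa(s) := \phi_\kappa(s) - \bigl(\etaoldlocal + s(\etanewlocal - \etaoldlocal)\bigr)$ is convex, vanishes at $s = \gammalocal$, and — since $\gammalocal$ is (by Lemma~\ref{lem:local-gamma}) near $1$ and the relevant curvature term is nonzero — is decreasing just to the left of $\gammalocal$; hence $\psi_\kappa(s) \leq 0$ for $s$ between the minimizer of $\psi_\kappa$ and $\gammalocal$, in particular for every $s \le \gammalocal$ in the relevant range. Evaluating at $s = \gamma \le \gammalocal$ gives $\psi_\kappa(\gamma) \le 0$, which is exactly $\eta_\kappa(\ugamma) \le \etaoldlocal + \gamma(\etanewlocal - \etaoldlocal)$.

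The main obstacle is the last step: one must justify that $\psi_\kappa(s) \le 0$ holds not just at $s = \gammalocal$ but for all admissible $s \le \gammalocal$, and in particular at the common value $s = \gamma = \min_{\kappa'} \gamma_{\kappa'}$. This is where convexity of $\eta_\kappa$ (hence of $\phi_\kappa$ and $\psi_\kappa$) is essential, together with the quantitative information from Lemma~\ref{lem:local-gamma} that $\gammalocal$ is the relevant root near $1$ with $\psi_\kappa$ having the right sign behaviour near $s = 0$ (where $\psi_\kappa(0) = 0$ as well, and the sign of $\psi_\kappa'$ or $\psi_\kappa''$ near $0$ is controlled by $\eta_\kappa''(\uold)(f(\uold), f(\uold))$ through the order conditions, as in the proof of \cite[Lemma~2.10]{ranocha2020general}). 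Once this is in place, the argument is uniform over the finitely many $\kappa$ precisely because $\gamma \le \gammalocal$ for every $\kappa$, so the inequality degrades gracefully from equality (at $\gammalocal$) to inequality (at $\gamma$) for each local entropy simultaneously.
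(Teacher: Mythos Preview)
Your overall strategy matches the paper's: invoke Lemma~\ref{lem:local-gamma} for each $\kappa$, pass to $\gamma = \min_\kappa \gammalocal$, use Lemma~\ref{eq:accuracy-u} for the order, and then argue by convexity along the secant line. The first two steps are fine. The convexity step, however, is both overcomplicated and contains errors.

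First, your claim that $\psi_\kappa(0) = 0$ is wrong in general: $\psi_\kappa(0) = \eta_\kappa(\uold) - \etaoldlocal$, and $\etaoldlocal = \sum_i \nu_i \eta_\kappa(u^{n-m+i})$ is the convex combination of the \emph{entropies}, not the entropy of $\uold = \sum_i \nu_i u^{n-m+i}$. These coincide only for $m=1$. What is true (and what the paper uses) is Jensen's inequality: $\eta_\kappa(\uold) \le \etaoldlocal$, so $\psi_\kappa(0) \le 0$. Second, your monotonicity claim (``decreasing just to the left of $\gammalocal$'') is backwards and in any case unnecessary: if $\psi_\kappa$ were decreasing there with $\psi_\kappa(\gammalocal)=0$, you would get $\psi_\kappa(s) \ge 0$ to the left, not $\le 0$. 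No curvature or sign analysis of $\psi_\kappa'$ is needed.

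The clean argument, which is exactly the paper's, is a one-line interpolation. Since $0 < \gamma \le \gammalocal$, write $\gamma = (1-\lambda)\cdot 0 + \lambda \cdot \gammalocal$ with $\lambda = \gamma/\gammalocal \in (0,1]$. Convexity of $\psi_\kappa$ gives
\[
  \psi_\kappa(\gamma) \le (1-\lambda)\,\psi_\kappa(0) + \lambda\,\psi_\kappa(\gammalocal)
  = (1-\lambda)\,\psi_\kappa(0) \le 0,
\]
using $\psi_\kappa(\gammalocal)=0$ and $\psi_\kappa(0)\le 0$. Unpacking $\psi_\kappa(\gamma)\le 0$ is precisely $\eta_\kappa(\ugamma) \le \etaoldlocal + \gamma(\etanewlocal - \etaoldlocal)$. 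Equivalently, the paper writes $\ugamma$ as the convex combination $(1-\gamma/\gammalocal)\,\uold + (\gamma/\gammalocal)\,\ugammalocal$ and applies $\eta_\kappa$ directly; your $\phi_\kappa,\psi_\kappa$ formulation is the same computation once you replace the incorrect ``$\psi_\kappa(0)=0$'' by ``$\psi_\kappa(0)\le 0$ via Jensen'' and drop the monotonicity detour.
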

\begin{proof}
  The existence of $\gammalocal = 1 + \O( \dt^{p-1} )$ is guaranteed
  by Lemma~\ref{lem:local-gamma}. Hence, $\gamma = \min_\kappa \gammalocal
  = 1 + \O( \dt^{p-1} )$ and interpreting $\ugamma$ as an
  approximation at $\tgamma$ results in a $p$th order method, cf.
  Lemma~\ref{eq:accuracy-u}.
  Finally, because of the convexity of the local entropies and
  $0 < \gamma \leq \gammalocal$, we have
  \begin{equation}
  \begin{aligned}
    \eta_\kappa(u^{n}_\gamma)
    &=
    \eta_\kappa\bigl( \uold + \gamma (\unew - \uold) \bigr)
    \\
    &=
    \eta\Biggl(
      \biggl( 1 - \frac{\gamma}{\gammalocal} \biggr) \uold
      + \frac{\gamma}{\gammalocal} \bigl( \uold + \gammalocal (\unew - \uold) \bigr)
    \Biggr)
    \\
    &\leq
    \biggl( 1 - \frac{\gamma}{\gammalocal} \biggr) \etalocal(\uold)
    + \frac{\gamma}{\gammalocal} \eta_\kappa\bigl(
      \uold + \gammalocal (\unew - \uold)
    \bigr)
    \\
    &\leq
    \biggl( 1 - \frac{\gamma}{\gammalocal} \biggr) \etaoldlocal
    + \frac{\gamma}{\gammalocal} \eta_\kappa\bigl(
      \uold + \gammalocal (\unew - \uold)
    \bigr)
    \\
    &=
    \biggl( 1 - \frac{\gamma}{\gammalocal} \biggr) \etaoldlocal
    + \frac{\gamma}{\gammalocal} \Bigl(
      \etaoldlocal + \gammalocal \bigl(
        \etanewlocal - \etaoldlocal
      \bigr)
    \Bigr)
    \\
    &=
    \etaoldlocal + \gamma \bigl(
      \etanewlocal - \etaoldlocal
    \bigr).
  \end{aligned}
  \end{equation}
  In the fourth step, we have used $\uold$ as a convex combination
  of the previous solution values.
  In the second last line, we have inserted \eqref{eq:relaxation-local-gamma}.
\end{proof}

\begin{remark}
  Global relaxation methods can also impose a global entropy equality.
  In contrast, local relaxation methods rely on convexity to guarantee
  a local entropy inequality. Hence, they cannot, in general, impose a global
  entropy equality. Instead, the local entropy inequalities sum up to a
  global entropy inequality. Since such entropy inequalities are often more
  important for numerical methods for conservation laws, we do not consider
  this to be a serious drawback.
\end{remark}

\begin{example}
\label{ex:eta-est-1-RK}
  A general (explicit or implicit) Runge--Kutta method with $s$ stages
  can be represented by its Butcher tableau \cite{butcher2016numerical}
  \begin{equation}
  \label{eq:butcher}
  \renewcommand{\arraystretch}{1.2}
  \begin{array}{c | c}
    c & A
    \\ \hline
      & b^T
  \end{array}\, ,
  \end{equation}
  where $A \in \R^{s \times s}$ and $b, c \in \R^s$. For \eqref{eq:ode}, a step
  from $u^{n-1} \approx u(t^{n-1})$ to $\unew \approx u(\tnew)$, where
  $\tnew = t^{n-1} + \dt$, is given by
  \begin{subequations}
  \label{eq:RK-step}
  \begin{align}
  \label{eq:RK-stages}
    y^i
    &=
    u^{n-1} + \dt \sum_{j=1}^{s} a_{ij} \, f(t^{n-1} + c_j \dt, y^j),
    \qquad i \in \set{1, \dots, s},
    \\
  \label{eq:RK-final}
    \unew
    &=
    u^{n-1} + \dt \sum_{i=1}^{s} b_{i} \, f(t^{n-1} + c_i \dt, y^i).
  \end{align}
  \end{subequations}
  As in \cite{ranocha2020relaxation, ranocha2020general}, a suitable
  estimate $\etanewlocal$ for Runge--Kutta methods
  with non-negative weights $b_i \geq 0$ can be obtained as
  \begin{equation}
  \label{eq:eta-est-1-RK}
    \etanewlocal
    =
    \eta(u^{n-1}) + \dt \sum_i b_i (\eta' f)(y^{i}).
  \end{equation}
  For Runge--Kutta methods, $m = 1$ is the natural choice, i.e.\
  $\told = t^{n-1}$, $\uold = u^{n-1}$, $\etaold = \eta(u^{n-1})$.
\end{example}

Other ways to obtain a suitable estimate $\etanewlocal$
are described in \cite{ranocha2020general}, e.g.\ for strong stability
preserving (SSP) linear multistep methods or schemes with a continuous
output formula (see the references therein for more details).

\section{Entropy dissipative spatial semidiscretizations}
\label{sec:semidiscretization}

In this section, we review the main components of the spatial discretization algorithm used in
the $hp$-adaptive SSDC solver \cite{parsani_ssdc_jcp_2020} as applied to the compressible Navier--Stokes equations (full details
can be found in \cite{carpenter2014entropy,parsani_entropy_stability_solid_wall_2015,
parsani2015entropy,fernandez_entropy_stable_p_euler_2019,
fernandez_entropy_stable_p_ns_2019}).
SSDC is the curvilinear, unstructured grid solver developed in the Advanced
Algorithms and Numerical Simulations Laboratory, which is part of the Extreme Computing Research Center
at King Abdullah University of Science and Technology.

A cornerstone of the SSDC algorithms is their provable stability
properties. In this context, entropy stability is the tool that is used to demonstrate non--linear
stability for the compressible Navier--Stokes equations and their semidiscrete and fully-discrete
counterparts.
\citet{rojas_ssdc_vs_others_jcp_2019, parsani_ssdc_jcp_2020} demonstrated the competitiveness and adequacy of these non-linearly stable
adaptive high-order accurate methods as base schemes for a new generation of unstructured computational fluid
dynamics tools with a high level of efficiency and maturity is demonstrated.

\subsection{A brief review of the entropy stability analysis}
\label{sebsection:bref_review_entropy_stability}
%
The compressible Navier--Stokes equations in Cartesian coordinates read
\begin{equation}
\label{eq:compressible_ns}
\begin{dcases}
\frac{\partial\Qnew}{\partial t}+\sum\limits_{m=1}^{3}\frac{\partial \FxmInew{m}}{\partial \xm{m}}
= \sum\limits_{m=1}^{3}\frac{\partial \FxmVnew{m}}{\partial\xm{m}},
&\forall \left(\xm{1},\xm{2},\xm{3}\right)\in\Omega,\quad t\ge 0,
\\
\Qnew\left(\xm{1},\xm{2},\xm{3},t\right)=\GBnew\left(\xm{1},\xm{2},\xm{3},t\right),
&\forall \left(\xm{1},\xm{2},\xm{3}\right)\in\Gamma,\quad t\ge 0,
\\
\Qnew\left(\xm{1},\xm{2},\xm{3},0\right)=\Gzeronew\left(\xm{1},\xm{2},\xm{3}\right),
&\forall \left(\xm{1},\xm{2},\xm{3}\right)\in\Omega,
\end{dcases}
\end{equation}
where $\Qnew$ are the conserved variables, $\FxmInew{m}$ are the
inviscid fluxes, and $\FxmVnew{m}$ are the viscous fluxes (a detailed
description of these vectors is given later).
The boundary data, $\GBnew$, and the initial condition, $\Gzeronew$, are assumed to be
in $L^{2}(\Omega)$, with the further assumption that $\GBnew$ coincides with linear,
well--posed boundary conditions, prescribed in such a way that either entropy conservation
or entropy stability is achieved.

The vector of conserved variables is given by
\begin{equation*}
\Qnew = \left[\rho,\rho\Um{1},\rho\Um{2},\rho\Um{3},\rho\E\right]\Tr,
\end{equation*}
where $\rho$ denotes the density, $\bm{\fnc{U}} = \left[\Um{1},\Um{2},\Um{3}\right]\Tr$ is
the velocity
vector, and $\E$ is the specific total energy.
Herein, to close the system of equations \eqref{eq:compressible_ns}, we use the
thermodynamic relation
\begin{equation}
\fnc{P} =  \rho \, R \, \fnc{T},
\end{equation}
where $\fnc{P}$ is the pressure, $\fnc{T}$ is the temperature, and $R$ is the gas constant.

The compressible Navier--Stokes equations given in \eqref{eq:compressible_ns} have a convex
extension (a redundant sixth equation constructed from a non-linear combination of the mass,
momentum, and energy equations), that, when integrated over the physical domain,
$\Omega$, depends
only on the boundary data and negative semi-definite dissipation terms.
This convex extension depends on an entropy function, $\Snew$, that is constructed from the
thermodynamic entropy as
\begin{equation}
\Snew=-\rho \sph,
\end{equation}
where $\sph$ is thermodynamic entropy that provides a mechanism for proving stability
in the $L^{2}$ norm.
The entropy variables, $\Wnew$, are an alternative variable set related to the conservative
variables via a one-to-one mapping.
They are defined in terms of the entropy function $\Snew$ by the relation
$\Wnew\Tr\equivN\partial\Snew/\partial\Qnew$ and they are extensively used in the entropy stability
proofs of the algorithm presented herein; see for instance
\cite{carpenter2014entropy}.

The entropy stability of the compressible Navier--Stokes equations can now be proven by
using the following steps \cite{carpenter2014entropy,parsani_entropy_stability_solid_wall_2015,fernandez_entropy_stable_p_ref_nasa_2019}:
\begin{enumerate}
\item Contract \eqref{eq:compressible_ns} with the entropy variables, \ie, multiply by
$\Wnew\Tr$, and integrate
over the domain
\begin{equation}\label{eq:entropy_analysis_step1}
\int_{\Omega}\left(\Wnew\Tr\frac{\partial\Qnew}{\partial t}+
\sum\limits_{m=1}^{3}\Wnew\Tr\frac{\partial \FxmInew{m}}{\partial \xm{m}}\right)\mr{d}\Omega =
\int_{\Omega}\sum\limits_{m,j=1}^{3}\Wnew\Tr\frac{\partial }{\partial\xm{m}}\left(
\Cij{m}{j}\frac{\partial\Wnew}{\partial \xm{j}}\right)\mr{d}\Omega,
\end{equation}
where the right-hand side of \eqref{eq:entropy_analysis_step1} is the viscous fluxes
recast in terms of entropy variables (see \cite{fisher2012phd,parsani_entropy_stability_solid_wall_2015} for
their construction);

\item Use the conditions \cite{tadmor2003entropy,dafermos_book_2010}
\begin{equation}
\label{eq:compat}
\Wnew\Tr\frac{\partial\FxmInew{m}}{\partial\xm{m}}=\frac{\partial\Fxmnew{m}}{\partial\xm{m}},
\quad m = 1,2,3,
\end{equation}
and then integration by parts on the left- and
right-hand side terms
\begin{equation}\label{eq:entropy_continuous_1}
\int_{\Omega}\Wnew\Tr\frac{\partial\Qnew}{\partial t}\mr{d}\Omega=\oint_{\Gamma}
\sum\limits_{m=1}^{3}\left(-\Fxmnew{m}+\int_{\Omega}\sum\limits_{j=1}^{3}\Wnew\Tr
\Cij{m}{j}\frac{\partial\Wnew}{\partial \xm{j}}\right)\mr{d}\Omega
-\int_{\Omega}\sum\limits_{m,j=1}^{3}\frac{\partial\Wnew\Tr}{\partial\xm{m}}\Cij{m}{j}
\frac{\partial\Wnew\Tr}{\partial\xm{j}};
\end{equation}
\item Use the definition of the entropy function, $\fncd{s}$, and the chain rule on
the temporal term, and entropy
stable boundary conditions
\begin{equation}\label{eq:entropy_continuous_2}
 \frac{d}{d t}\int_{\Omega}\Snew \, \mr{d}\Omega = \frac{d}{dt}\eta \leq \mathrm{Data};
\end{equation}
\item To obtain a bound on the entropy that is then converted into a bound
on the solution, $\Qnew$, integrate in time
\begin{equation}
\int_{\Omega}\Qnew\Tr\Qnew\mr{d}\Omega\leq \mathrm{Data}.
\end{equation}
\end{enumerate}
For further details on continuous entropy analysis, see, for example,
\cite{dafermos_book_2010,
carpenter_entropy_stable_staggered_2015}.

The approximation of the compressible Navier--Stokes equations \eqref{eq:compressible_ns} proceeds by partitioning
the domain $\Omega$ into $K$ non-overlapping sub-domains $\Ok$.
On the $\kappa\Th$ element, the generic entropy stable discretization reads
\begin{equation}
\label{eq:compressible_ns_diss}
\frac{\mr{d}\qk{\kappa}}{\mr{d}t}+
\sum\limits_{m=1}^{3}2\DxmI{m}{\kappa}\circ\matFxm{m}{\qk{\kappa}}{\qk{\kappa}}\ones{\kappa}=
\sum\limits_{m,j=1}^{3}\DxmVone{m}{\kappa}\matCmj{m}{j}\thetaa{j}+
\bm{\mathrm{SAT}}^{I}+\bm{\mathrm{SAT}}^{V}
+\bm{\mathrm{diss}}^{I}+\bm{\mathrm{diss}}^{V},
\end{equation}
where the vector $\qk{\kappa}$ is the discrete solution at the mesh nodes.
Specifically, we use diagonal-norm SBP operators constructed on the Legendre--Gauss--Lobatto (LGL)
nodes, \ie, we employ a discontinuous collocated spectral element approach (see,
for instance,
\cite{carpenter2014entropy,parsani_entropy_stability_solid_wall_2015,parsani_ssdc_jcp_2020})
The vectors $\bm{\mathrm{diss}}^{I}$ and $\bm{\mathrm{diss}}^{V}$ are added interface dissipation
for the inviscid and viscous portions of the equations, respectively
(the construction of these is detailed
in~\cite{parsani2015entropy,parsani_ssdc_jcp_2020} for conforming interfaces,
and in~\cite{fernandez_entropy_stable_hp_ref_snpdea_2019,parsani_ssdc_jcp_2020}
for $hp$-nonconforming interfaces). The second term on the left-hand side
is the entropy conservative discretization of the inviscid fluxes, $\FxmInew{m}$,
whereas the first term on the right-hand side is the entropy dissipative
discretization of the viscous fluxes, $\FxmVnew{m}$, \cite{parsani_ssdc_jcp_2020}.

Following closely the entropy stability analysis presented in
\cite{carpenter2014entropy,parsani_entropy_stability_solid_wall_2015,carpenter2016towards}, the total entropy of the spatial
discretization satisfies
\begin{equation}\label{eq:estimate-no-slip-bc-2}
  \od{}{t} \mathbf{1}^{\top} \widehat{\Pmatvol} \, \bm{S} = \od{}{t} \eta
	= \mathbf{BT} - \mathbf{DT} + \mathbf{\Upsilon}.
\end{equation}
This equation mimics at the semidiscrete level each term in \eqref{eq:entropy_continuous_1} and
hence \eqref{eq:entropy_continuous_2}.
Here, $\mathbf{BT}$ is the discrete
boundary term (i.e., the discrete version of the surface integral term on the right-hand side of \eqref{eq:entropy_continuous_1}),
$\mathbf{DT}$ is
the discrete dissipation term (i.e., the discrete version of the second term on the right-hand side of \eqref{eq:entropy_continuous_1}),
and $\mathbf{\Upsilon}$ enforces interface coupling and boundary conditions \cite{carpenter2014entropy,parsani_entropy_stability_solid_wall_2015,carpenter2016towards}. For completeness, we note that the matrix
$\widehat{\Pmatvol}$ may be thought of as the mass matrix in the context of the discontinuous Galerkin
finite element method.

\subsection{Application of local relaxation methods}

Concatenating the conservative variables $\qk{\kappa}$ into a vector $u$,
the semidiscrete local evolution equations \eqref{eq:compressible_ns_diss}
yield an ODE as \eqref{eq:ode}. As described in Example~\ref{ex:conservation-laws},
the local entropies $\eta_\kappa$ are
\begin{equation}
  \eta_\kappa(u)
  =
  \mathbf{1}_\kappa^{\top} \widehat{\Pmatvol} \, \bm{S}_\kappa
  \approx
  \int_{\Omega_\kappa} \Snew(u).
\end{equation}

Because of their linear covariance, local relaxation schemes based on
Runge--Kutta schemes are locally conservative in the sense
of \cite{shi2018local}. Hence, a generalized Lax--Wendroff theorem applies.
In particular, the local entropy inequalities imposed by local relaxation
methods guarantee that an entropy weak solution is approximated if the
conditions of the generalized Lax--Wendroff theorem are satisfied.

\section{Numerical experiments}
\label{sec:numerical-experiments}

The numerical experiments presented in this manuscript are carried out using the
unstructured, $hp$-adaptive curvilinear grid solver SSDC
\cite{parsani_ssdc_jcp_2020}. SSDC is developed in the Advanced 
Algorithms and Numerical Simulations Laboratory (AANSLab), which is part of the
Extreme Computing Research Center at King Abdullah University of Science and Technology.
SSDC is built on top of the
Portable and Extensible Toolkit for Scientific computing
(PETSc)~\cite{petsc-user-ref}, its mesh topology abstraction
(DMPLEX)~\cite{KnepleyKarpeev09}, and scalable ODE/DAE solver
library~\cite{abhyankar2018petsc}.
The $p$-refinement algorithm is fully implemented
in SSDC, whereas the $h$-refinement strategy leverages the
capabilities of the p4est library \cite{BursteddeWilcoxGhattas11}.
Additionally, the conforming
numerical scheme is based on the algorithms proposed in
\cite{carpenter2014entropy,parsani2015entropy,parsani_entropy_stability_solid_wall_2015,carpenter2016towards}
and uses the optimized metric terms for tensor-pruduct elements
presented in \cite{nolasco_optim_metrics_2019} and which are computed using
the optimization algorithm first proposed in \cite{crean2018entropy} for
non-tensor product cells.
We have used the following explicit Runge--Kutta
methods.
\begin{itemize}
  \item
  BSRK(4,3):
  Four stage, third-order method with an embedded second-order accurate method
  \cite{bogacki1989a32}.

  \item
  RK(4,4):
  The classical four stage, fourth-order accurate method
  \cite{kutta1901beitrag}.

  \item
  BSRK(8,5):
  Eight stage, fifth-order accurate method with an embedded fourth-order
    accurate method
  \cite{bogacki1996efficient}.

  \item
  VRK(9,6):
  Nine stage, sixth-order accurate method with an embedded fifth-order accurate method
  of the family developed in \cite{verner1978explicit}\footnote{The
  coefficients are taken from \url{http://people.math.sfu.ca/~jverner/RKV65.IIIXb.Robust.00010102836.081204.CoeffsOnlyFLOAT} at 2019-04-27.}.
\end{itemize}
In our experience, Brent's method and the first method of
\cite{alefeld1995algorithm}
are robust and performant schemes to solve for the global/local
relaxation parameters.
Depending on the time step $\dt$, the time integration method,
the spatial semidiscretization, the initial and boundary data,
and other schemes such as the secant method or Newton's method can be
slightly more efficient. However, the difference is not very
significant in most cases and Brent's method and the first method
of \cite{alefeld1995algorithm} are in general more robust than the
latter schemes.

Compared to the global relaxation approach, more scalar equations have
to be solved for the local relaxation approach. However, these equations
are fully local and not coupled. Hence, they can be parallelized efficiently.

In Section \ref{subsec:convergence}, we report the convergence study of the
SSDC solver for an inviscid and a viscous unsteady flow problems for which the analytical
solution is known (i.e., the propagation of an isentropic vortex and a viscous
shock). In Sections \ref{sec:sodshocktube} and \ref{sec:sineshock},
the local relaxation Runge--Kutta schemes are tested for the Sod's shock tube and the
sine-wave shock interaction problems. Those are two widely used test cases used
to validate new spatial and temporal discretizations. Next, in Sections 
\ref{subsec:cylinder} and \ref{subsection:hit}, we present the results for the
supersonic flow past a circular cylinder and the homogeneous isotropic
turbulence with shocklets. In computational fluid dynamics, entropy stable
schemes shows their superior robustness when they are used to simulated problems
characterized by discontinuous solution or under-resolved turbulence.
The test cases shown in Sections \ref{subsec:cylinder} and \ref{subsection:hit} are
representative of those two delicate flow situations.

\subsection{Convergence studies}\label{subsec:convergence}

In this section, we check that the local relaxation approach does not
reduce the order of convergence of the schemes for both the Euler and
the Navier--Stokes equations.

\subsubsection{Isentropic vortex propagation}

Here, entropy conservative semidiscretizations of the Euler equations
are applied to the well-known isentropic vortex test problem in three
space dimensions and combined with local relaxation Runge--Kutta methods.
The analytical solution of this problem is
\begin{equation}
\label{eq:vortex-solution}
\begin{split}
  & \fnc{G} = 1
  -\left\{
  \left[
  \left(\xm{1}-x_{1,0}\right)
  -U_{\infty}\cos\left(\alpha\right)t
  \right]^{2}
  +
  \left[
  \left(\xm{2}-x_{2,0}\right)
  -U_{\infty}\sin\left(\alpha\right)t
  \right]^{2}
  \right\},\\
  &\rho = T^{\frac{1}{\gamma-1}},
  \quad
  T = \left[1-\epsilon_{\nu}^{2}M_{\infty}^{2}\frac{\gamma-1}{8\pi^{2}}\exp\left(\fnc{G}\right)\right],\\
  &\Um{1} = U_{\infty}\cos(\alpha)-\epsilon_{\nu}
  \frac{\left(\xm{2}-x_{2,0}\right)-U_{\infty}\sin\left(\alpha\right)t}{2\pi}
  \exp\left(\frac{\fnc{G}}{2}\right),\\
  &\Um{2} = U_{\infty}\sin(\alpha)-\epsilon_{\nu}
  \frac{\left(\xm{1}-x_{1,0}\right)-U_{\infty}\cos\left(\alpha\right)t}{2\pi}
  \exp\left(\frac{\fnc{G}}{2}\right),
  \quad \Um{3} = 0,
\end{split}
\end{equation}
where $U_{\infty}$ is the modulus of the free-stream velocity,
$M_{\infty}$ is the free-stream Mach number, $c_{\infty}$ is the
free-stream speed of sound, and $\left(x_{1,0},x_{2,0},x_{3,0}\right)$
is the vortex center.
The following values are used: $U_{\infty}=M_{\infty} c_{\infty}$,
$\epsilon_{\nu}=5$, $M_{\infty}=0.5$, $\gamma=1.4$, $\alpha=\pi/4$,
and $\left(x_{1,0},x_{2,0},x_{3,0}\right)=\left(0,0,0\right)$.
The computational domain is given by
\begin{equation}
  \xm{1} \in [-5, 5], \qquad
  \xm{2} \in [-5, 5], \qquad
  \xm{3} \in [-5, 5], \qquad
  t\in[0, 5].
\end{equation}
The initial condition is given by \eqref{eq:vortex-solution} with $t=0$.

The convergence study for the entropy dissipative fully discrete method
is conducted by simultaneously refining the grid spacing and the time step
while keeping the ratio $U_{\infty}\Delta t/\Delta x$ constant.
The errors and convergence rates in the $L^1$, $L^2$, and $L^{\infty}$
norms are reported in Table~\ref{tab:isentropicvortex}.
We observe that the computed order of convergence in the $L^2$
norm matches the design order of the scheme. The errors are nearly the
same for the baseline time integration methods that do not guarantee
a local entropy inequality.

\begin{table}[htb]
\centering
  \caption{Convergence study for the isentropic vortex using entropy
  dissipative SBP-SAT schemes with different solution polynomial degrees
  $p$ and local relaxation Runge--Kutta methods
  (error in the density).}
  \label{tab:isentropicvortex}
  \begin{tabular*}{\linewidth}{@{\extracolsep{\fill}}*9c@{}}
    \toprule
    $p$ & RK Method & $N$
    & $L^1$ Error & $L^1$ Rate
    & $L^2$ Error & $L^2$ Rate
    & $L^\infty$ Error & $L^\infty$ Rate
    \\
    \midrule
    2 & BSRK(4,3) &
       10 &  1.34e-03  &   ---    &  7.08e-05  &   ---    &  1.85e-02 &   ---     \\
&&     20 &  1.00e-04 &      3.74 &  8.82e-06 &      3.01 &  4.28e-03 &      2.11 \\
&&     40 &  6.10e-06 &      4.04 &  8.53e-07 &      3.37 &  6.18e-04 &      2.79 \\
&&     60 &  1.57e-06 &      3.35 &  2.29e-07 &      3.25 &  1.76e-04 &      3.09 \\
&&     80 &  6.18e-07 &      3.23 &  9.18e-08 &      3.17 &  7.21e-05 &      3.11 \\
    \midrule
    3 & RK(4,4) &
       10 &  2.00e-04  &   ---    &  1.02e-05  &   ---    &  3.80e-03 &   ---     \\
&&     20 &  1.38e-05 &      3.86 &  7.26e-07 &      3.81 &  3.29e-04 &      3.53 \\
&&     40 &  5.62e-07 &      4.61 &  3.72e-08 &      4.29 &  2.83e-05 &      3.54 \\
&&     60 &  6.28e-08 &      5.41 &  5.63e-09 &      4.66 &  6.09e-06 &      3.79 \\
&&     80 &  1.26e-08 &      5.58 &  1.54e-09 &      4.51 &  2.06e-06 &      3.77 \\
    \midrule
    4 & BSRK(8,5) &
       10 &  2.97e-05  &   ---    &  1.60e-06  &   ---    &  7.48e-04 &   ---     \\
&&     20 &  6.05e-07 &      5.62 &  3.89e-08 &      5.36 &  5.62e-05 &      3.73 \\
&&     40 &  2.04e-08 &      4.89 &  1.27e-09 &      4.94 &  1.29e-06 &      5.45 \\
&&     60 &  1.93e-09 &      5.82 &  1.39e-10 &      5.46 &  1.82e-07 &      4.82 \\
&&     80 &  3.13e-10 &      6.32 &  2.80e-11 &      5.55 &  4.29e-08 &      5.03 \\
    \midrule
    5 & VRK(9,6) &
       10 &  4.31e-06  &   ---    &  2.20e-07  &   ---    &  8.97e-05 &   ---     \\
&&     20 &  5.90e-08 &      6.19 &  3.79e-09 &      5.86 &  3.34e-06 &      4.75 \\
&&     40 &  4.24e-10 &      7.12 &  3.89e-11 &      6.61 &  7.76e-08 &      5.43 \\
&&     60 &  2.80e-11 &      6.70 &  2.99e-12 &      6.32 &  7.35e-09 &      5.81 \\
&&     80 &  3.92e-12 &      6.84 &  4.99e-13 &      6.22 &  1.30e-09 &      6.01 \\
    \bottomrule
  \end{tabular*}
\end{table}

Figure~\ref{fig:isentropicvortex_bs3} shows the evolution of the total entropy $\eta(u)$ in the periodic domain for
$N = 8$ elements in the $\xm{1}$- and $\xm{2}$-directions for BSRK(4,3)
with an entropy conservative spatial semidiscretization. Using adaptive time stepping,
a loose tolerance results in larger time steps and a bigger variation
of the entropy for the baseline scheme.
The local relaxation scheme yields a stronger reduction of the total entropy
in the first few steps for the loose tolerance to guarantee all local
entropy inequalities. For both tolerances, the local relaxation method
results in less entropy dissipation at longer times than the
baseline scheme.
Of course, the global relaxation method yields a conserved entropy.
This demonstrates that the local relaxation methods do not enforce a
provable local entropy inequality by simply adding dissipation compared
to the baseline scheme. If the time integration method itself introduces
too much dissipation, local relaxation schemes can even remove some of
this superfluous dissipation while still guaranteeing a local entropy
inequality.

\begin{figure}[htb]
\centering
  \begin{subfigure}{0.7\textwidth}
  \centering
    \includegraphics[width=\textwidth]{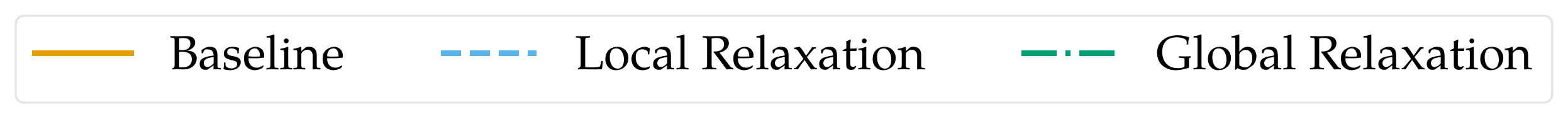}
  \end{subfigure}%
  \\
  \begin{subfigure}{0.49\textwidth}
  \centering
    \includegraphics[width=\textwidth]{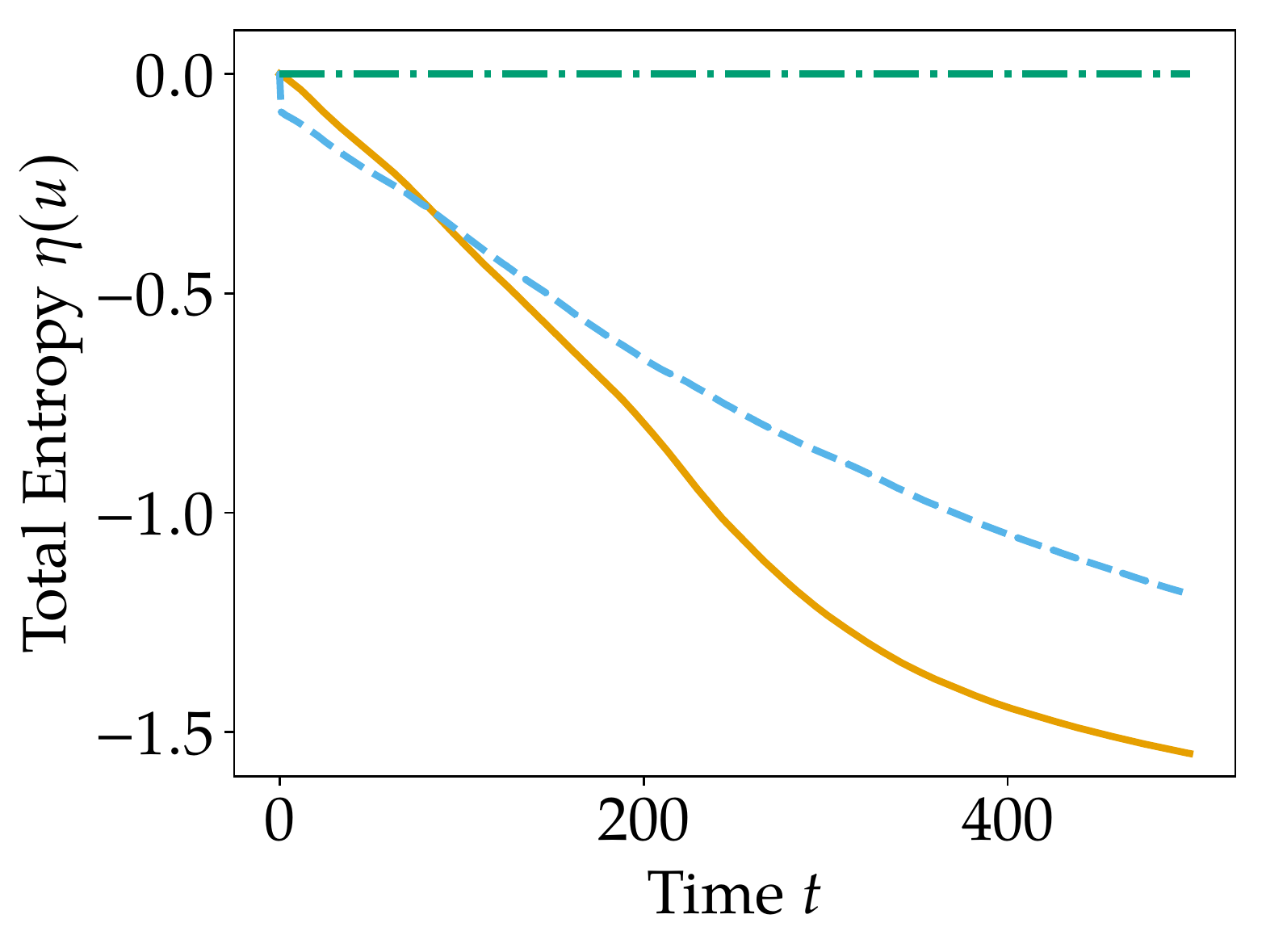}
    \caption{Tolerance $10^{-4}$.}
  \end{subfigure}%
  \hspace*{\fill}
  \begin{subfigure}{0.49\textwidth}
  \centering
    \includegraphics[width=\textwidth]{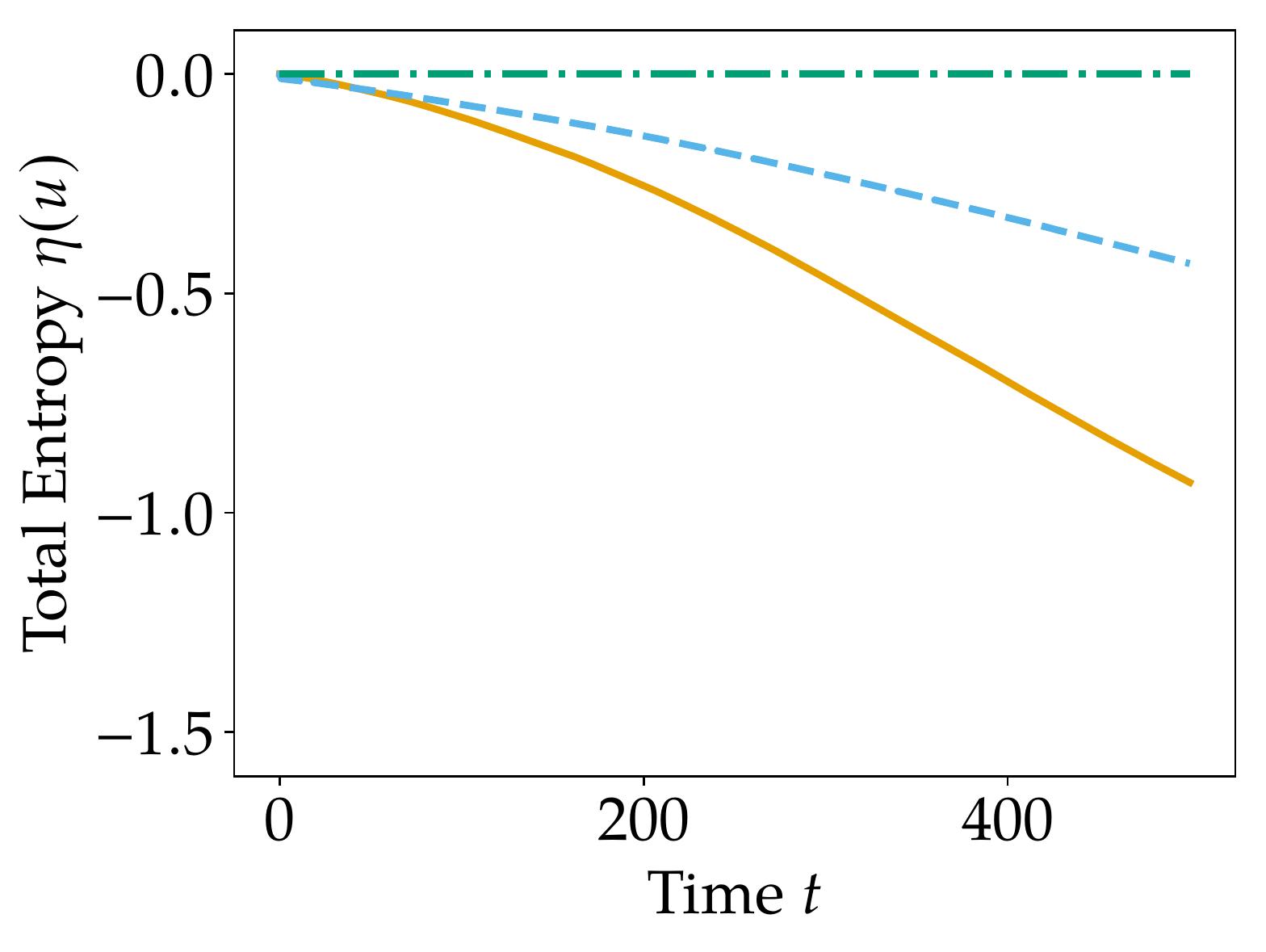}
    \caption{Tolerance $10^{-5}$.}
  \end{subfigure}%
  \caption{Evolution of the total entropy $\eta(u)$ for the isentropic vortex
           with an entropy conservative spatial semidiscretization and the
           BSRK(4,3) time integration scheme with or without local/global
           relaxation and adaptive time stepping with different tolerances.}
  \label{fig:isentropicvortex_bs3}
\end{figure}

\subsubsection{Three-dimensional viscous shock propagation}

Here, a convergence study for the compressible Navier--Stokes equations
is performed using the analytical solution of a propagating viscous shock.
We assume a planar shock propagating along the $\xm{1}$ coordinate direction
with a Prandtl number of $Pr=3/4$.
The momentum $\fnc{V}(\xm{1})$ of the analytical solution satisfies the ODE
\begin{equation}
\begin{split}
  &\alpha\fnc{V}\frac{\partial\fnc{V}}{\partial\xm{1}}
  - (\fnc{V}-1)(\fnc{V}-\fnc{V}_{f}) = 0,
  \qquad -\infty\leq\xm{1}\leq+\infty,
\end{split}
\end{equation}
whose solution can be written implicitly as\footnote{We have chosen the
constant of integration as zero because the center of the viscous shock
is assumed to be at $\xm{1} = 0$.}
\begin{equation}
\label{eq:implicit_sol_vs}
  \xm{1} - \frac{1}{2}\alpha \log\left|(
      \fnc{V}(\xm{1})-1)(\fnc{V}(\xm{1})-\fnc{V}_{f})\right|+\frac{1+\fnc{V}_{f}}{1-\fnc{V}_{f}}\log\left|\frac{\fnc{V}(\xm{1})-1}{\fnc{V}(\xm{1})-\fnc{V}_{f}}
  \right| = 0,
\end{equation}
where
\begin{equation}
  \fnc{V}_{f} \equiv \frac{\fnc{U}_{L}}{\fnc{U}_{R}}, \qquad
  \alpha \equiv \frac{2\gamma}{\gamma + 1}\frac{\,\mu}{Pr\dot{\fnc{M}}}.
\end{equation}
Here, $\fnc{U}_{L/R}$ are the known velocities to the left and right of the
shock at $-\infty$ and $+\infty$, respectively, $\dot{\fnc{M}}$ is the
constant mass flow across the shock, $Pr$ is the Prandtl number, and
$\mu$ is the dynamic viscosity. The mass and total enthalpy are constant
across the shock. Moreover, the momentum and energy equations become redundant.

For our tests, $\fnc{V}$ is computed from \eqref{eq:implicit_sol_vs}
to machine precision using bisection.
The moving shock solution is obtained by applying a uniform translation to
the above solution. Initially, at $t = 0$, the shock is located at the center
of the domain. We use the parameters $M_{\infty}=2.5$, $Re_{\infty}=10$,
and $\gamma=1.4$ and the domain defined by
\begin{equation}
  \xm{1} \in [-0.5,0.5],
  \qquad
  \xm{2} \in [-0.5,0.5],
  \qquad
  \xm{3} \in [-0.5,0.5],
  \qquad
  t \in [0,0.5].
\end{equation}
The boundary conditions are prescribed by penalizing the numerical solution
against the analytical solution, which is also used to prescribe the
initial condition.

Results of a convergence study for this setup using the locally entropy-stable 
fully discrete scheme are shown in Table~\ref{tab:viscousshock}.
Here, the time step has been reduced under grid refinement to keep the
ratio $U_{\infty}\Delta t/\Delta x^2$ constant.
The resulting errors and convergence rates in the $L^1$, $L^2$, and $L^{\infty}$
norms are reported in Table~\ref{tab:viscousshock}.
As for the compressible Euler equations, we observe that the experimental
order of convergence in both the $L^1$ and $L^2$ norms is the expected order of 
convergence.

\begin{table}[htb]
\centering
  \caption{Convergence study for the viscous shock problem using entropy
  dissipative SBP-SAT schemes with different solution polynomial degrees
  $p$ and local relaxation Runge--Kutta methods
  (error in the density).}
  \label{tab:viscousshock}
  \begin{tabular*}{\linewidth}{@{\extracolsep{\fill}}*9c@{}}
    \toprule
    $p$ & RK Method & $N$
    & $L^1$ Error & $L^1$ Rate
    & $L^2$ Error & $L^2$ Rate
    & $L^\infty$ Error & $L^\infty$ Rate
    \\
    \midrule
    2 & BSRK(4,3) &
        5 &  1.46e-02  &   ---     &  2.06e-02  &   ---     &  7.30e-02 &   ---     \\
&&     10 &  1.75e-03  &    3.06   &  2.50e-03  &    3.05   &  9.40e-03 &    2.96   \\
&&     15 &  3.86e-04  &    3.73   &  6.41e-04  &    3.35   &  3.34e-03 &    2.56   \\
&&     20 &  1.57e-04  &    3.13   &  2.58e-04  &    3.16   &  1.34e-03 &    3.16   \\
&&     25 &  7.80e-05  &    3.14   &  1.29e-04  &    3.11   &  7.04e-04 &    2.89   \\
    \midrule
    3 & RK(4,4) &
        5 &  1.40e-03  &   ---    &  2.04e-03  &   ---    &  5.95e-03 &   ---     \\
&&     10 &  8.86e-05 &    3.98   &  1.43e-04 &    3.83   &  6.29e-04 &    3.24   \\
&&     15 &  1.76e-05 &    3.99   &  2.89e-05 &    3.94   &  1.53e-04 &    3.49   \\
&&     20 &  6.51e-06 &    3.45   &  1.05e-05 &    3.53   &  5.96e-05 &    3.27   \\
&&     25 &  3.02e-06 &    3.45   &  4.90e-06 &    3.42   &  2.88e-05 &    3.26   \\
    \midrule
    4 & BSRK(8,5) &
        5 &  4.70e-04  &   ---     &  7.19e-04  &   ---     &  3.37e-03 &   ---     \\
&&     10 &  1.94e-05  &    4.60   &  2.39e-05  &    4.91   &  6.16e-05 &    5.77   \\
&&     15 &  1.30e-06  &    6.66   &  2.43e-06  &    5.64   &  2.11e-05 &    2.64   \\
&&     20 &  2.54e-07  &    5.67   &  4.78e-07  &    5.65   &  3.41e-06 &     6.35   \\
&&     25 &  7.60e-08  &    5.41   &  1.48e-07  &    5.24   &  1.37e-06 &    4.07   \\
    \midrule
    5 & VRK(9,6) &
        5 &  8.20e-05  &   ---     &  1.05e-04  &   ---     &  2.46e-04 &   ---     \\
&&     10 &  8.93e-07  &    6.52   &  1.54e-06  &    6.09   &  1.10e-05 &    4.48   \\
&&     15 &  6.58e-08  &    6.43   &  9.82e-08  &    6.79   &  7.04e-07 &    6.79   \\
&&     20 &  1.27e-08  &    5.72   &  1.87e-08  &    5.77   &  1.34e-07 &    5.77   \\
&&     25 &  3.58e-09  &    5.68   &  5.14e-09  &    5.78   &  3.83e-08 &    5.62   \\
    \bottomrule
  \end{tabular*}
\end{table}

\subsection{Sod's shock tube}
\label{sec:sodshocktube}

Sod's shock tube is a classical Riemann problem for the one-dimensional
compressible Euler equations that is used to evaluate a numerical method
in the presence of a shock, a rarefaction wave, and a contact discontinuity.
Of particular interest is the possible smearing of the shock and contact
discontinuity, or the generation of oscillations at any discontinuity or very
sharp gradient.

The domain is given by
$\xm{1} \in [0,1]$, $t \in [0, 0.2]$,
and the initial condition is set to
\begin{equation}
  \rho
  =
  \begin{cases}
    1 & \xm{1} < 0.5, \\
    1/8 & \xm{1} \geq 0.5,
  \end{cases}
  \qquad
  \fnc{P}
  =
  \begin{cases}
    1     & \xm{1}<0.5, \\
    1 /10 & \xm{1}\geq 0.5,
  \end{cases}
  \qquad
  \Um{1} = 0.
\end{equation}
All simulations use a ratio of specific heats equals to $c_{\fnc{P}}/c_{\fnc{V}} = 7/5$.

The entropy dissipative spatial semidiscretization uses polynomials
of degree $p = 3$ on a grid with $N = 128$ elements. The problem is
integrated in time using the classical fourth-order accurate Runge--Kutta
method RK(4,4) with a time step $\dt = \num{5.0e-5}$.

Results of the density with and without relaxation are shown in
Figure~\ref{fig:sodshocktube-rho}. In general, the density profiles are
very similar. In particular, the local relaxation method does not result
in a notable smearing of the discontinuities. The local relaxation
method reduces the oscillations around the discontinuities up to
\SI{1}{\permille}. If the time step is increased by a factor of two,
the local relaxation approach reduces the oscillations up to \SI{10}{\percent}.
Nevertheless, small overshoots near the non-smooth parts of the
numerical approximation are visible. This behavior is expected for a
spatial discretization that uses high-order polynomials and no explicit
shock capturing mechanism.

\begin{figure}[ht]
\centering
  \begin{subfigure}{0.33\textwidth}
  \centering
    \includegraphics[width=\textwidth]{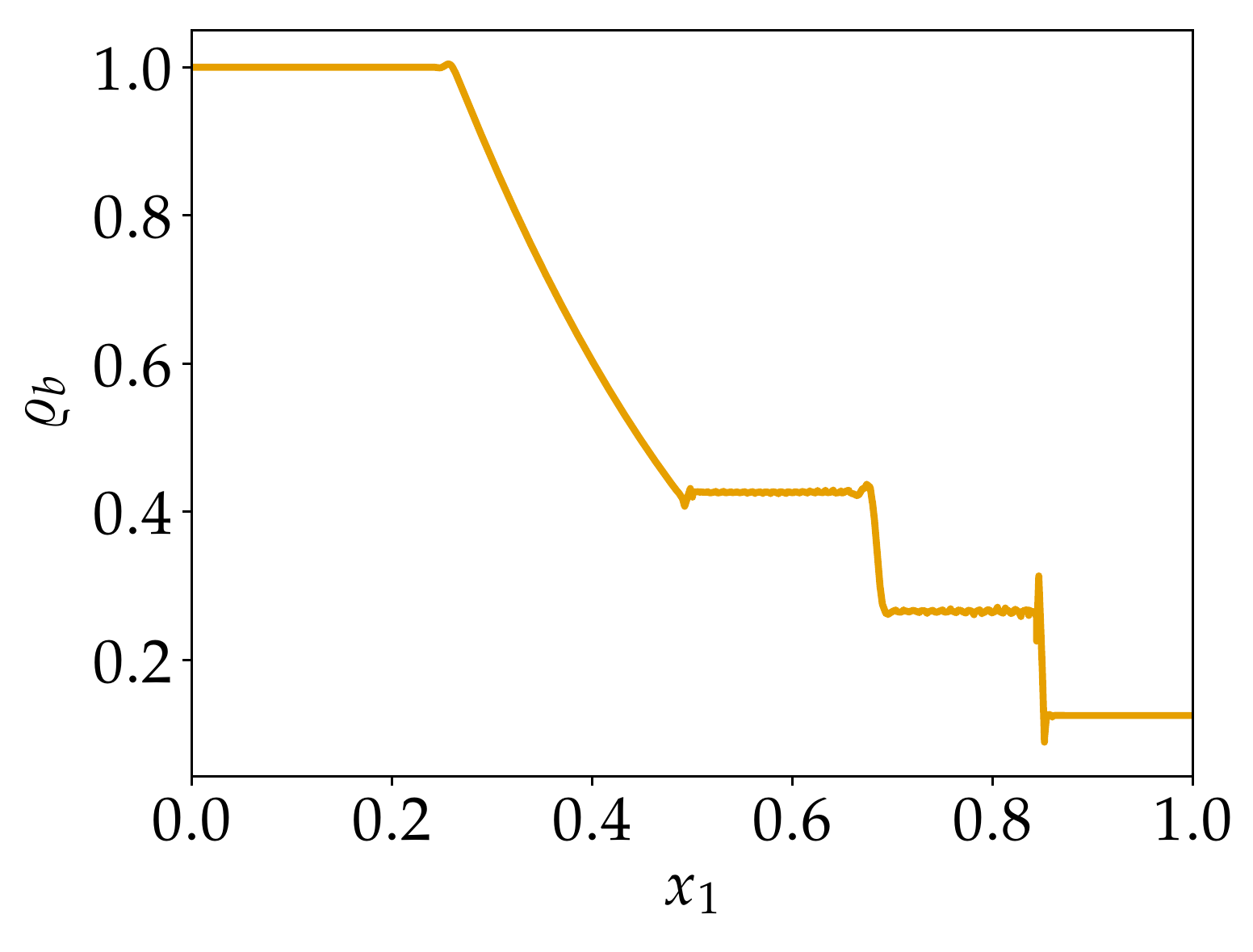}
    \caption{Baseline method.}
  \end{subfigure}%
  \hspace*{\fill}
  \begin{subfigure}{0.33\textwidth}
  \centering
    \includegraphics[width=\textwidth]{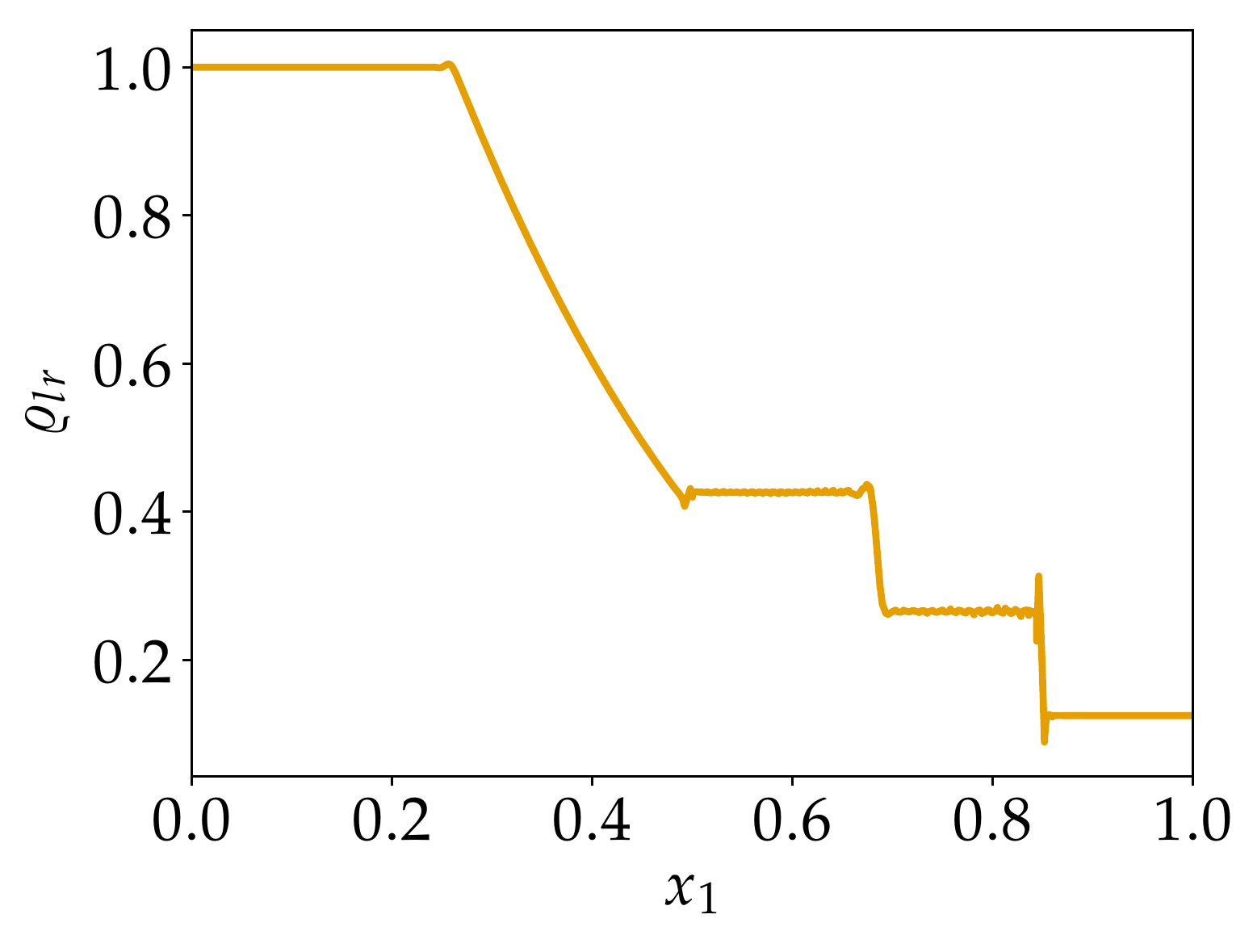}
    \caption{Local relaxation method.}
  \end{subfigure}%
  \hspace*{\fill}
  \begin{subfigure}{0.33\textwidth}
  \centering
    \includegraphics[width=\textwidth]{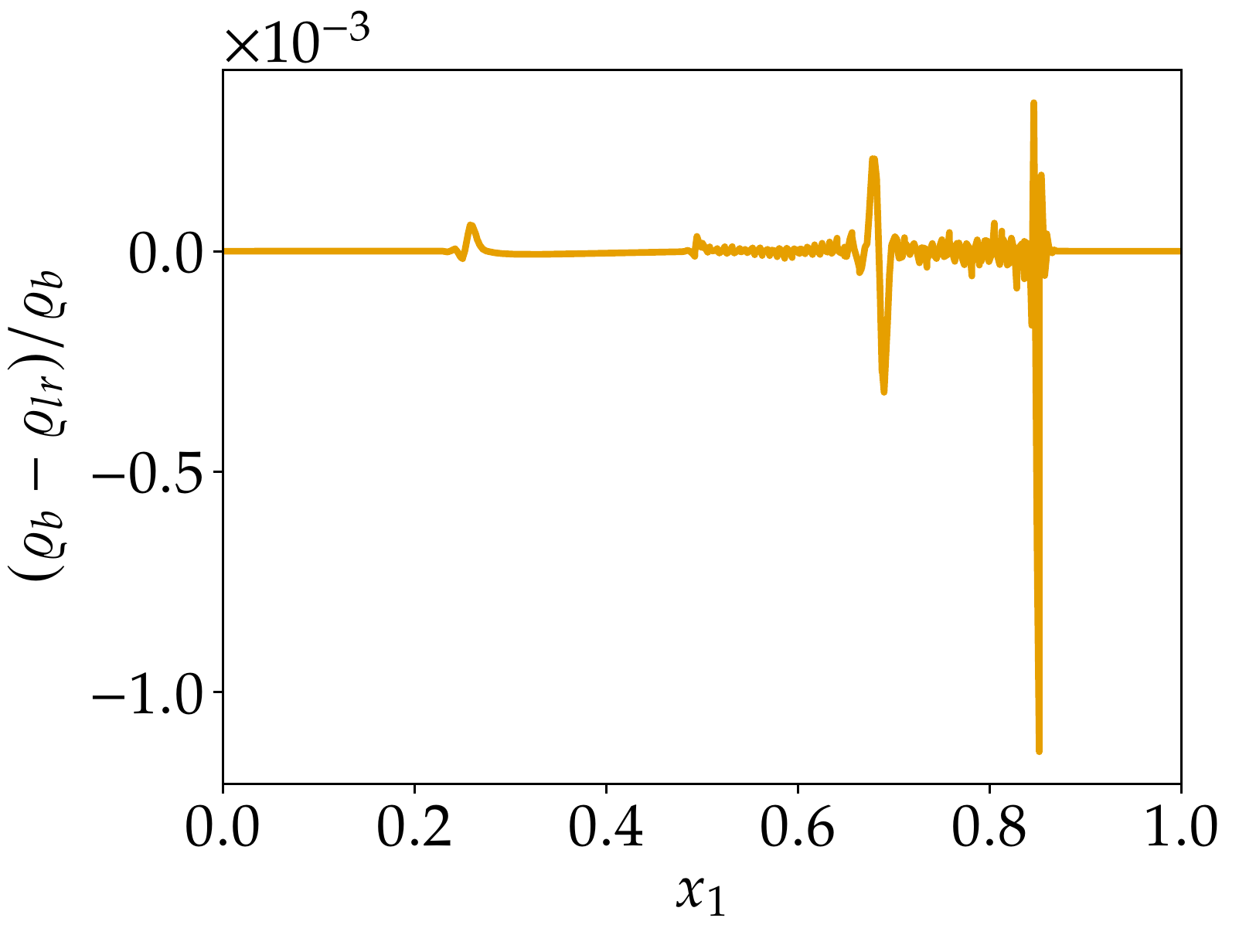}
    \caption{Relative difference.}
  \end{subfigure}%
  \caption{Density profiles of numerical solutions of Sod's shock tube
           problem using polynomials of degrees $p = 3$ in $N = 128$ elements.}
  \label{fig:sodshocktube-rho}
\end{figure}

\subsection{Sine-shock interaction}
\label{sec:sineshock}

Another benchmark problem with both strong discontinuities and smooth
structures is given by the sine-shock interaction. This problem is well
suited for testing high-order shock-capturing schemes.
The governing equations are again the one-dimensional compressible Euler
equations, which are solved in the domain given by
$\xm{1} \in [-5, +5]$, $t \in [0, 5]$.
The problem is initialized with \cite{titarev_2014}
\begin{equation}
  \left( \rho, \Um{1}, \fnc{P} \right)
  =
  \begin{cases}
    \left( 1.515695, 0.523346, 1.805 \right), & \text{if } -5 \le x < -4.5, \\
    \left(1 + 0.1 \sin(20\pi x), 0, 1 \right), & \text{if } -4.5 \le x \le 5.
  \end{cases}
\end{equation}

The entropy dissipative semidiscretization uses polynomials of degree
$p = 3$ on a grid with $N = 256$ elements and the time step with the RK(4,4) is
$\dt = \num{2.0e-4}$. The other parameters are the same
as for the Sod's shock tube problem presented in Section~\ref{sec:sodshocktube}.
The density profiles of the numerical solutions are shown in
Figure~\ref{fig:sineshock-rho}.
The results with and without local relaxation are
scarcely distinguishable, supporting the conclusions of
Section~\ref{sec:sodshocktube}.

\begin{figure}[ht]
\centering
  \begin{subfigure}{0.33\textwidth}
  \centering
    \includegraphics[width=\textwidth]{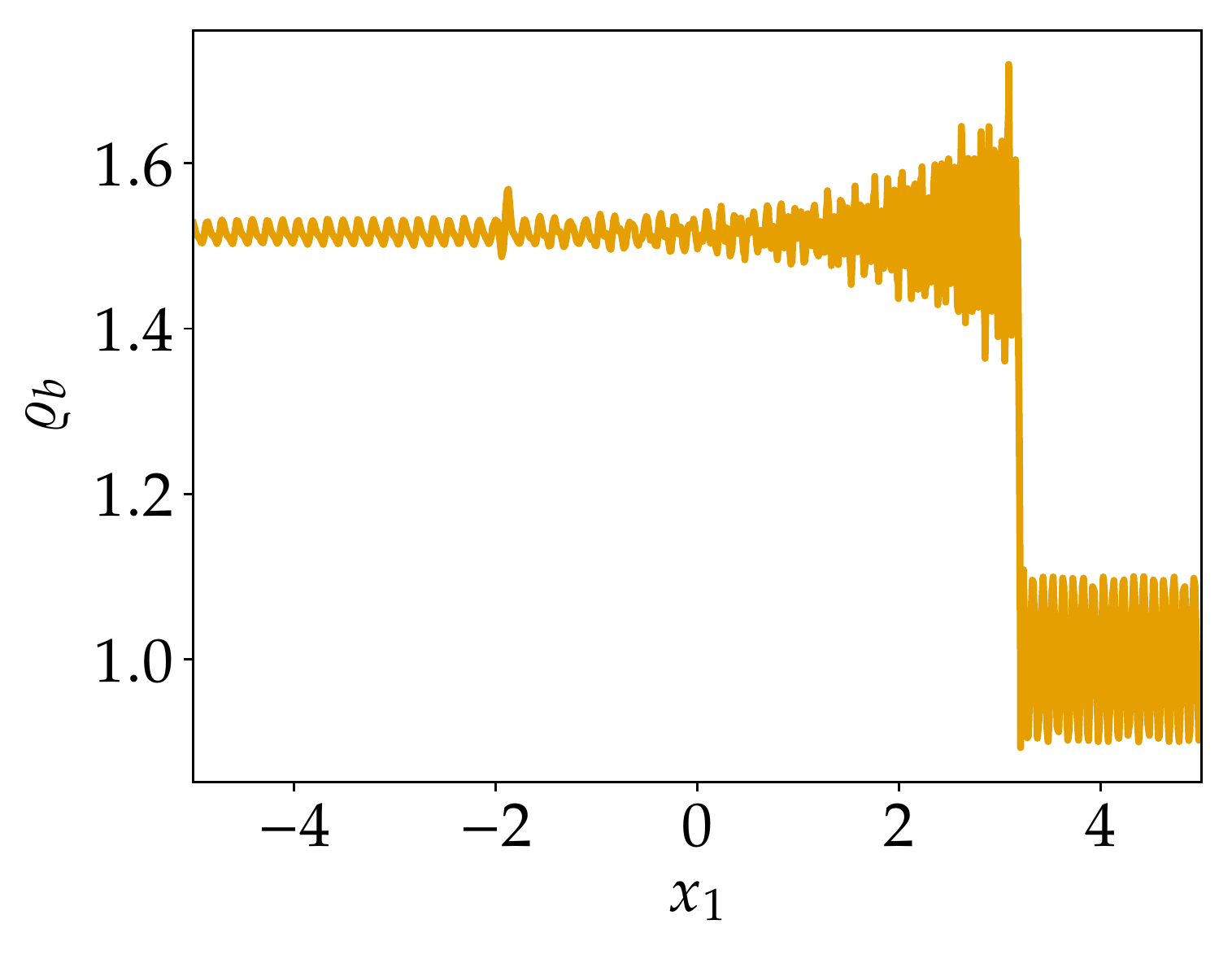}
    \caption{Baseline method.}
  \end{subfigure}%
  \hspace*{\fill}
  \begin{subfigure}{0.33\textwidth}
  \centering
    \includegraphics[width=\textwidth]{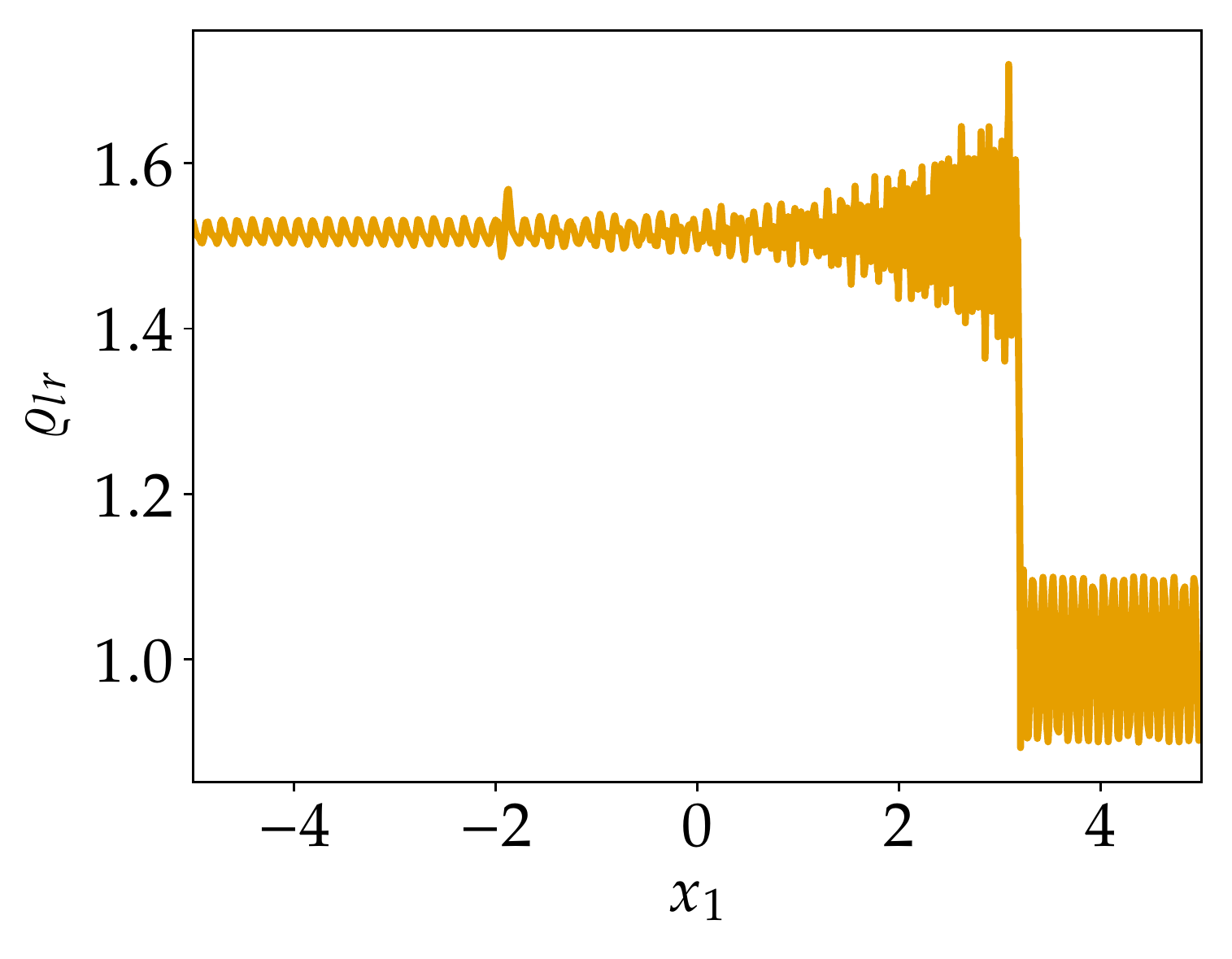}
    \caption{Local relaxation method.}
  \end{subfigure}%
  \hspace*{\fill}
  \begin{subfigure}{0.33\textwidth}
  \centering
    \includegraphics[width=\textwidth]{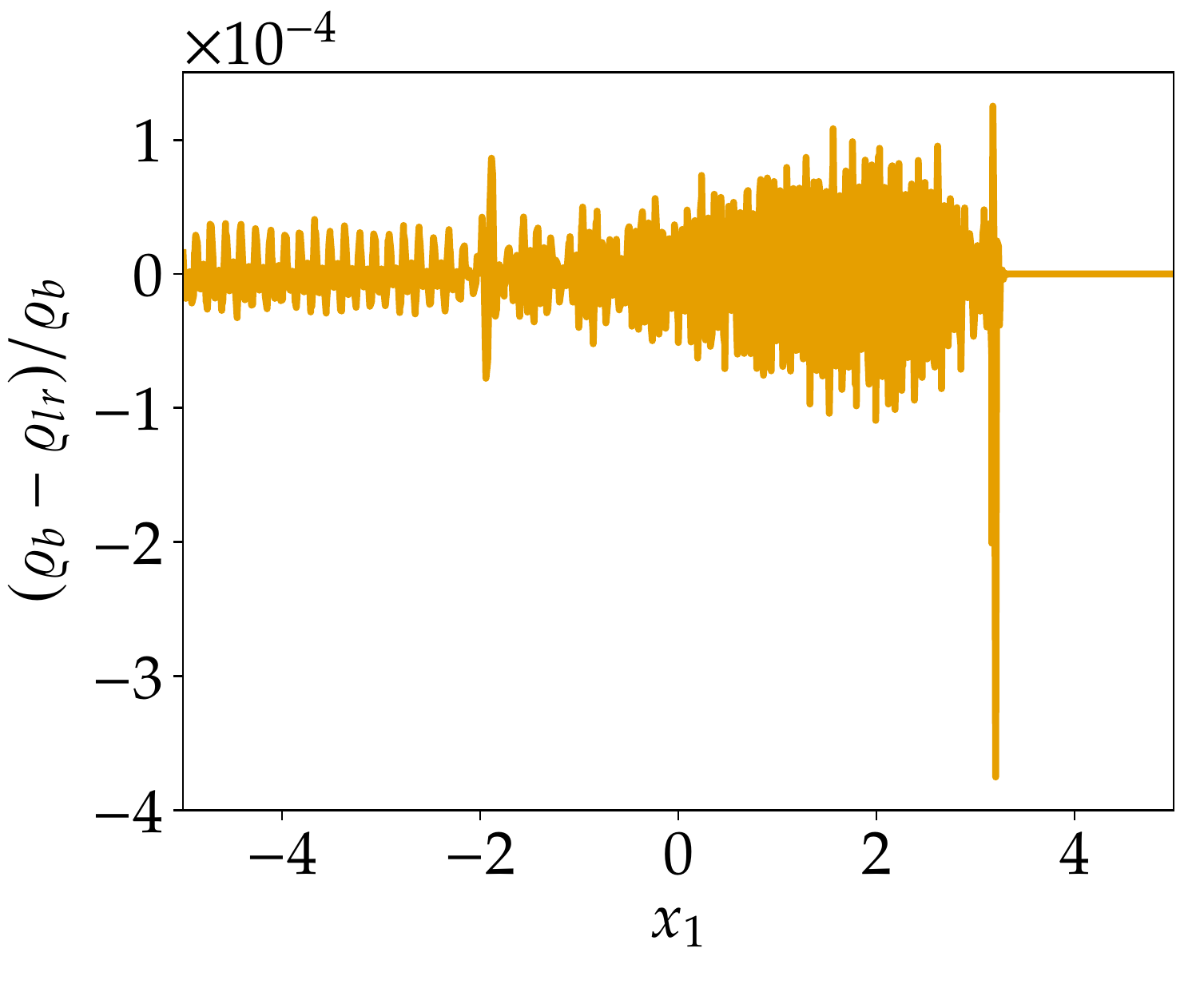}
    \caption{Relative difference.}
  \end{subfigure}%
  \caption{Density profiles of numerical solutions of the sine-shock
           interaction problem using polynomials of degrees $p = 3$
           in $N = 256$ elements.}
  \label{fig:sineshock-rho}
\end{figure}

For both Sod's shock tube and the sine-shock interaction problems,
the relaxation parameter $\gamma$ of the local relaxation methods is smaller
than unity by approximately $10^{-2}$ for all times, as shown in
Figure~\ref{fig:shock_gamma}.
In contrast, the relaxation parameter for the global relaxation method
oscillates following a regular pattern with amplitude $\lesssim 10^{-5}$.

\begin{figure}[htb!]
\centering
  \begin{subfigure}{0.5\textwidth}
  \centering
    \includegraphics[width=\textwidth]{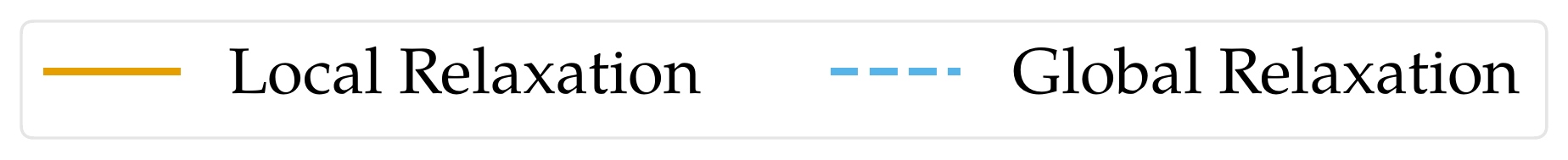}
  \end{subfigure}%
  \\
  \begin{subfigure}[b]{0.49\textwidth}
  \centering
    \includegraphics[width=\textwidth]{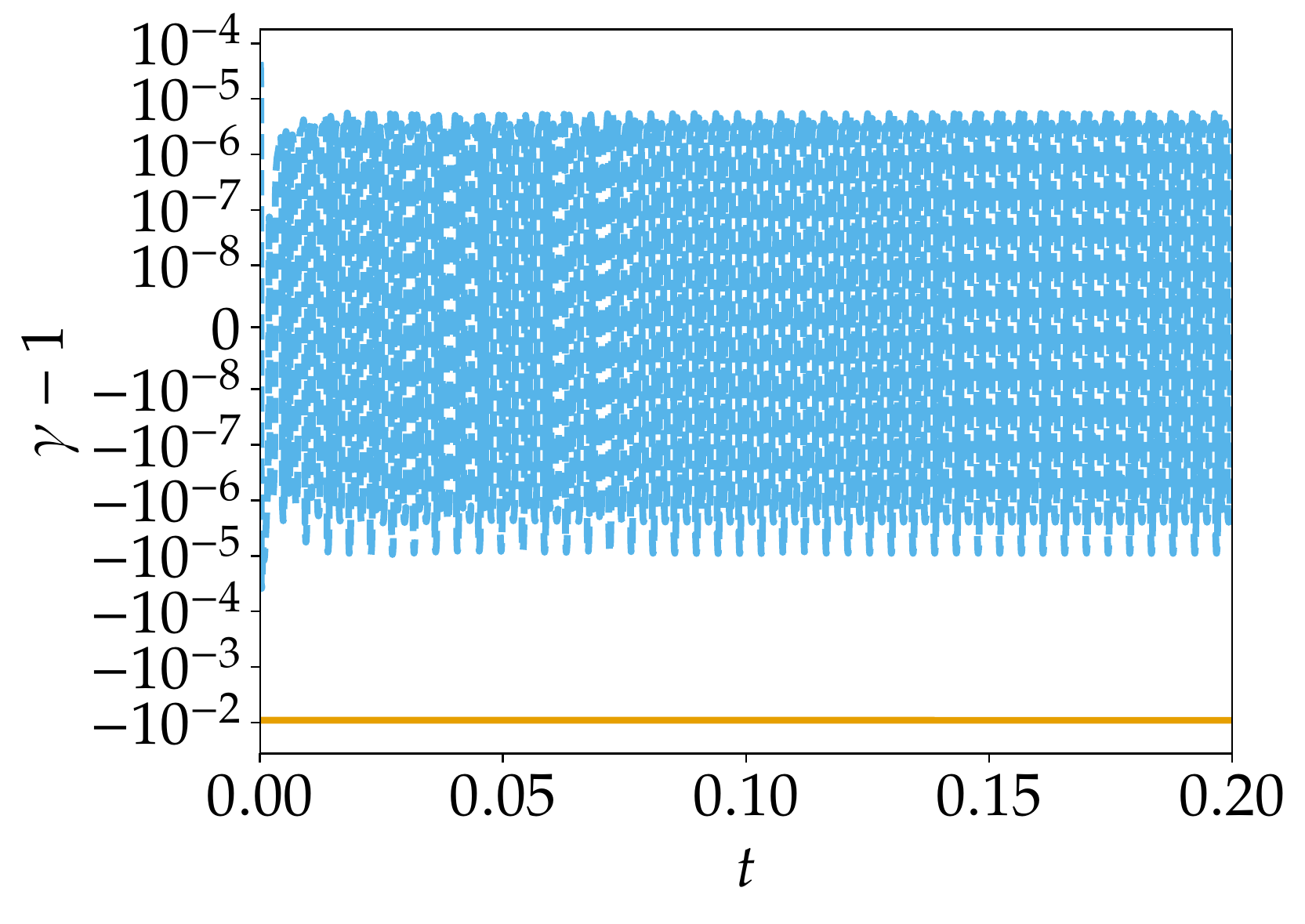}
    \caption{Sod's shock tube.}
    \label{fig:sodshocktube_gamma}
  \end{subfigure}%
  \hspace*{\fill}
  \begin{subfigure}[b]{0.49\textwidth}
  \centering
    \includegraphics[width=\textwidth]{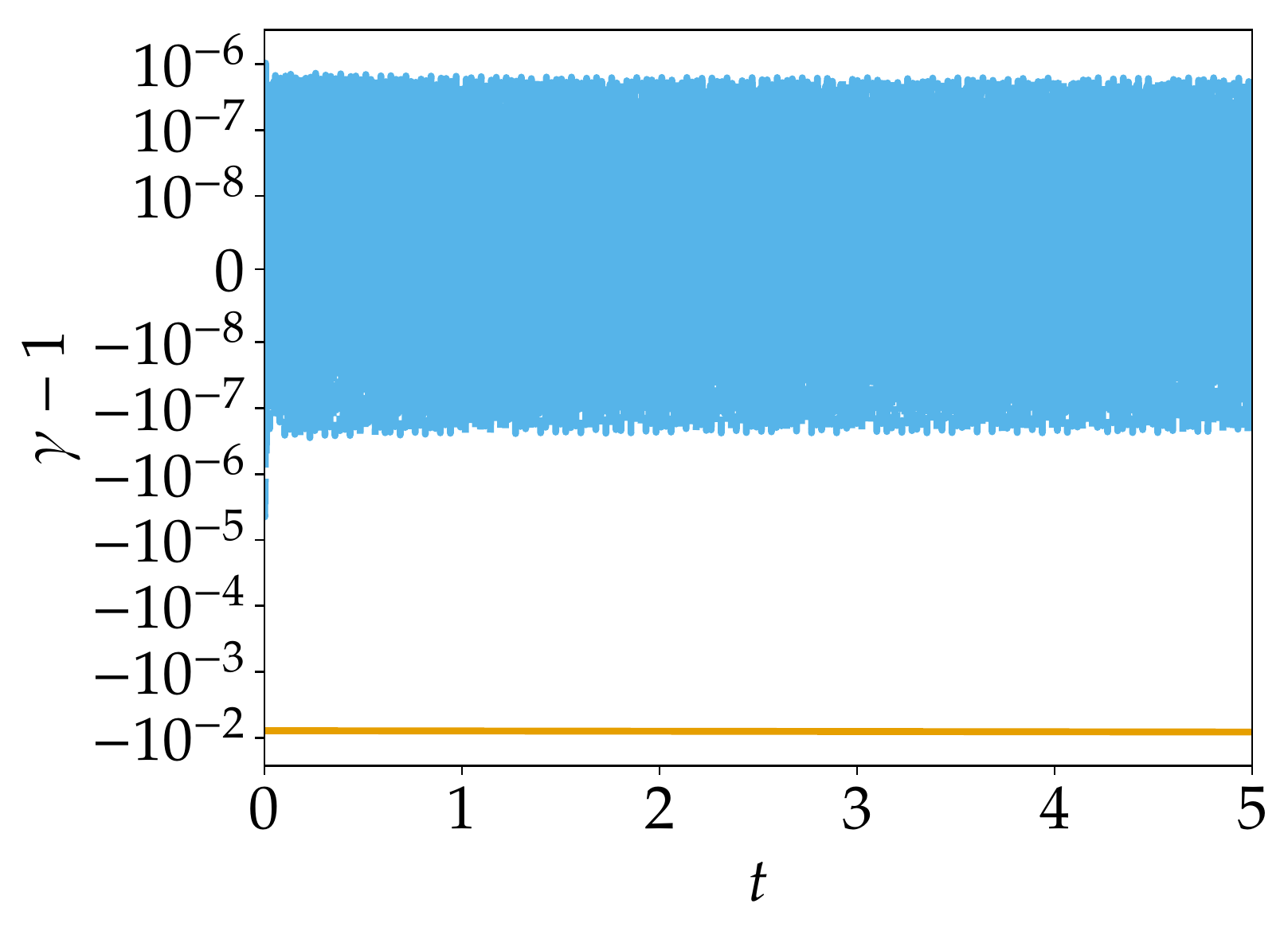}
    \caption{Sine-shock interaction.}
    \label{fig:sineshock_gamma}
  \end{subfigure}%
  \caption{Variation of the relaxation parameter $\gamma$ for both global
           and local relaxation methods for the one-dimensional test
           problems with discontinuities.}
  \label{fig:shock_gamma}
\end{figure}

\subsection{Local variation of the relaxation parameter}
\label{sec:visu_of_local_local_gamma}

To visualize the variation of the local relaxation parameter $\gammalocal$,
we use the initial data
\begin{equation}
\label{eq:visu_of_local_gamma}
  \left( \rho, \Um{1}, \fnc{P} \right)
  =
  \begin{cases}
    \left( 1 + 0.5 \cos(2 \pi x), 0.5, 1 \right), & \text{if } x < 0, \\
    \left( 0.5 + 0.25 \cos(2 \pi x), 0.5, 0.8 \right), & \text{if } x \ge 0.
  \end{cases}
\end{equation}
The spatial entropy dissipative semidiscretization of the Euler equations
uses $200$ elements with polynomials of degree $p = 3$ in the domain $[-2, 2]$.
The classical RK(4,4) method is used with a time step $\dt = 10^{-4}$ to integrate
the numerical solution until $t = 0.1$.

\begin{figure}[htb!]
\centering
  \begin{subfigure}[b]{0.5\textwidth}
  \centering
    \includegraphics[width=\textwidth]{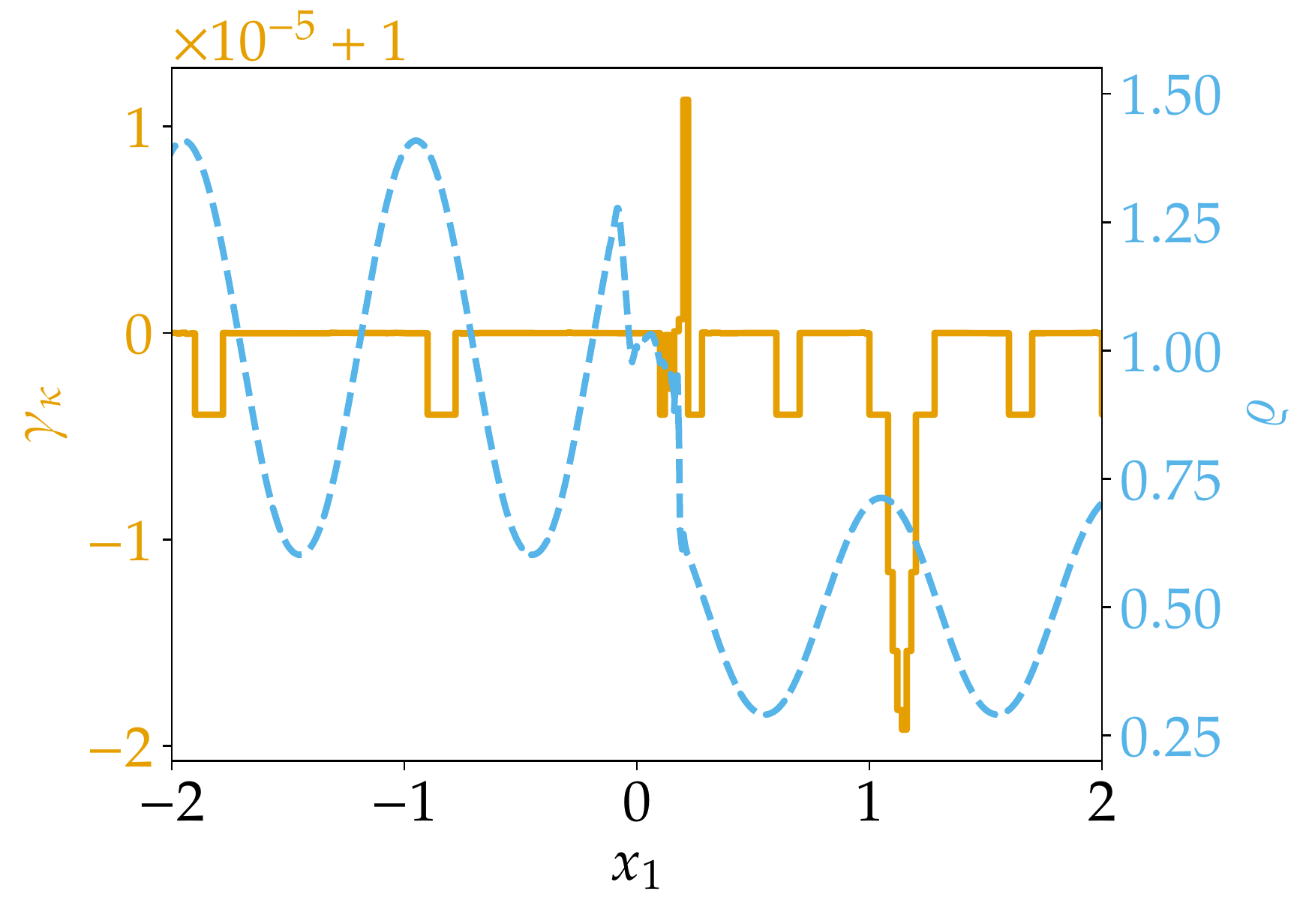}
  \end{subfigure}%
  \caption{Variation of the local relaxation parameter $\gammalocal$
           and numerical solution of the density $\rho$ in space at time
           $t = 0.1$ for the test problem \eqref{eq:visu_of_local_gamma}
           with both smooth and discontinuous structures.}
  \label{fig:visu_of_local_gamma}
\end{figure}

The variation of the local relaxation parameter $\gammalocal$ at the final
time is shown in Figure~\ref{fig:visu_of_local_gamma}. Overall,
$|\gammalocal - 1| < \num{2e-5}$. The local relaxation parameter is smaller
than unity near the smooth local maxima of the numerical solution of the
density $\rho$. Thus, the baseline time integration method introduces
some anti-dissipation near the smooth local maxima. Since $\gammalocal$
is smaller at the maximum with smaller amplitude on the right-hand side,
the baseline time integration method introduces more anti-dissipation on the 
right-hand side.

Additionally, $\gammalocal < 1$ near the discontinuity.
Interestingly, there is also a very small region near the discontinuity
where $\gammalocal > 1$. Thus, the spatial semidiscretization introduces
dissipation in this region and the baseline time integration method introduces some
small additional amount of dissipation in front of the right-moving
discontinuity.

\subsection{Supersonic cylinder}\label{subsec:cylinder}

Shock-shock and shock-vortex interactions are very important in the simulation of compressible
turbulent flows for many engineering applications. These phenomena have received significant
attention in the past and remain an active field of research and development.
In this section, we simulate the supersonic flow past a two-dimensional circular cylinder of diameter $\mathrm{D}$.

The supersonic flow past this blunt object is a very complicated test because a detached shock wave is
originated ahead of the cylinder, while a rotational flow field of mixed type, i.e.,
containing supersonic and subsonic regions, appears behind it.
Therefore, despite the simplicity of the geometry, capturing the flow at this
regime is quite challenging and
poses various numerical difficulties that have to be carefully handled and resolved by both the
spatial and temporal integration algorithms.

\begin{figure}[htb!]
\centering
\begin{overpic}[width=0.80\columnwidth,tics=5]{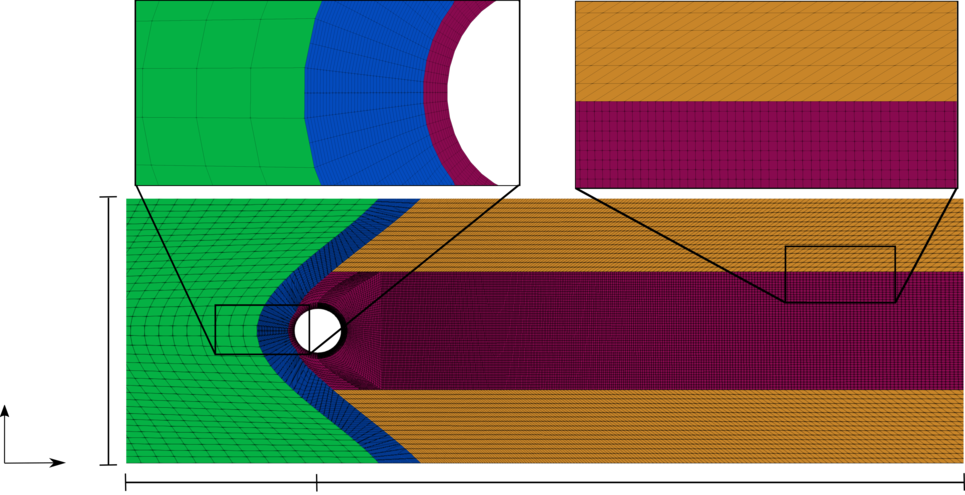}
\put (7,16) {$6\mathrm{D}$}
\put (7,2.80) {$x_1$}
\put (-1,10.10) {$x_2$}
\put (20.1,-1) {$10\mathrm{D}$}
\put (65.1,-1) {$30\mathrm{D}$}
\end{overpic}
\caption{Computational domain, $hp-$non-conforming grid, and solution polynomial degree
distribution for the simulation of the supersonic flow past a circular cylinder.
Blue: $p=1$ and one level of $h$-refinement, green: $p=2$, orange: $p=3$, magenta: $p=4$ and one
level of $h-$refinement.}
\label{fig:domain_p_super_cyl}
\end{figure}

The similarity parameters based on the quiet flow state upstream of the cylinder (denoted by the
subscript $\infty$) are $\mathrm{Re}_{\infty}=10^4$ and $\mathrm{Ma}_{\infty}=3.5$.
The computational domain is $\xm{1}/\mathrm{D}\in[-3,3], \, \xm{2}/\mathrm{D}\in[-20,20]$ and
$t \, \mathcal{U}_\infty/\mathrm{D}\in[0,40]$.
\Cref{fig:domain_p_super_cyl} shows an overview of the computational domain, grid, and solution
polynomial degree, $p$, distribution.
The region of the computational domain upstream of the bow shock is discretized
with $p=2$; the region
that surrounds the bow shock is discretized with $p=1$ and one level of $h$-refinement; the
cylinder's boundary layer and wake regions are discretized with $p=4$ and one level of
$h$-refinement, and in the rest of the domain, $p=2$ is used.
The total number of hexahedral elements and degrees of freedom (DOFs) are
$5{,}0678$ and $33{,}958$, respectively.

The entropy stable adiabatic no-slip wall boundary conditions presented in
\cite{parsani_entropy_stability_solid_wall_2015,dalcin2019conservative} are
applied to the solid surfaces.
Inviscid wall boundary conditions are imposed on the top and bottom horizontal boundaries.
Periodic boundary conditions are used for the boundaries perpendicular to the axis of the cylinder.
Far-field boundary conditions are used for the inlet and outlet boundaries. 
The classical RK(4,4) method with relaxation is used for the time integration.
e emphasize that the same temporal discretization without local relaxation
requires a time step which is approximately nine times smaller than the average time
step used with the relaxation approach.

A quantitative analysis of the computational results can be performed using the oblique shock wave
theory.
The most useful relation of the oblique shock wave theory is the one providing the deflection
angle $\theta$ explicitly as a function of the shock angle, $\beta$, and local Mach
number, $\mathrm{Ma}$:
\begin{equation}
\tan\left(\theta\right) = 2 \cot\left(\beta\right)
\left[\frac{\mathrm{Ma}^2\sin^2\left(\beta\right)-1}{\mathrm{Ma}^2
\left(\gamma+\cos\left(2\beta\right)\right)+2}\right].
\end{equation}
The shock wave angle, $\beta$, with respect to the incoming flow may be evaluated at each location,
while the other two directions or associated flow deflection angle, $\theta$, can be determined
(along the shock wave abscissa, $s$) from the local simulated flow properties upstream and
downstream of the detached shock wave.

\Cref{fig:sup-cyl-temp} shows the temperature contour plot of our
numerical simulation. We can clearly see the strong bow shock located in front of the cylinder and the
complicated vortical flow structures and reflected shocks behind the object.
\begin{figure}[!ht]
\centering
\includegraphics[width=1.00\columnwidth,trim = 0 0 -5 0,clip]
{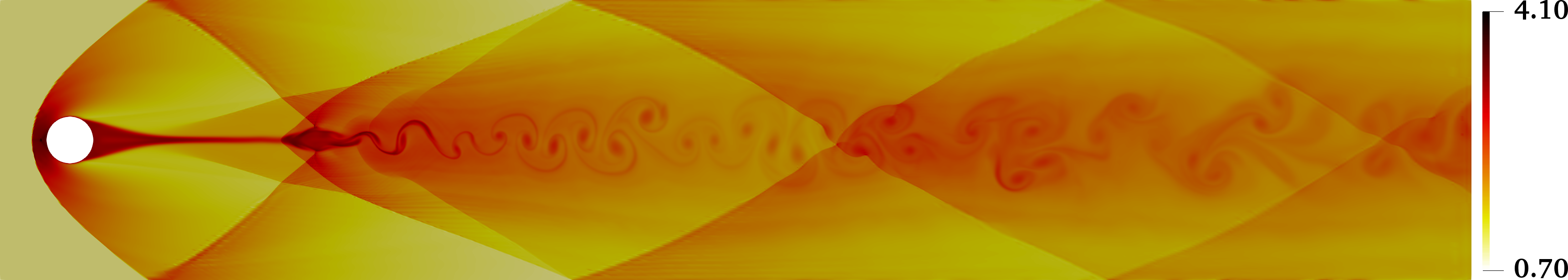}
\caption{Visualization of the instantaneous temperature field, $\mathcal{T}$, for the supersonic flow past a circular
cylinder.}
\label{fig:sup-cyl-temp}
\end{figure}

\begin{figure}[ht]
\centering
\includegraphics[width=0.6\columnwidth,trim = 0 0 -5 0,clip]
{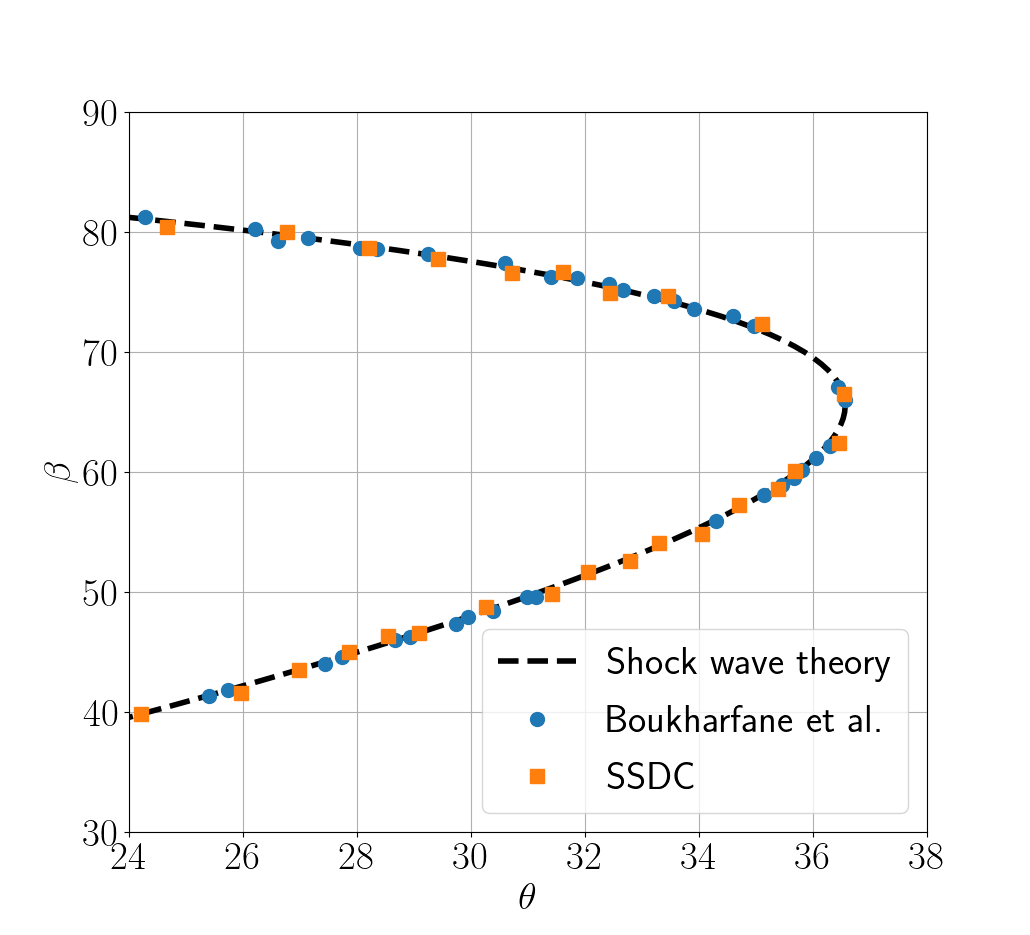}
\caption{$\theta$-$\beta$-$\mathrm{Ma}$ plot for the supersonic flow past a circular
cylinder.}
\label{fig:sup-cyl-theta-beta}
\end{figure}

\Cref{fig:sup-cyl-theta-beta} compares the pair $\theta(s)$-$\beta(s)$ computed by post-processing
the solution obtained with the SSDC solver, showing i) the results obtained with the oblique
shock--wave theory and ii) the reference results reported in
\cite{boukharfane2018combined}, which are computed with a
seventh-order accurate weighted essentially non-oscillatory (WENO7) scheme with characteristic flux
reconstruction combined with an eighth-order centered difference scheme on a grid with more
than $4 \times 10^6$ nodes.
The SSDC results are in very
good agreement with the theoretical curve and the numerical data set.

\subsection{Homogeneous isotropic turbulence}
\label{subsection:hit}

We used the homogeneous isotropic turbulence test case to assess the resolution for broadband
turbulent flows of the present solver, which is quantitatively measured by the predicted turbulent
kinetic energy spectrum.
We consider the decaying of compressible isotropic turbulence with an initial turbulent Mach
number of
$\mathrm{Ma}_t=0.3$ (compressible isotropic turbulence in  nonlinear subsonic regime) and Taylor
Reynolds number of $\mathrm{Re}_\lambda=72$. These flow parameters are, respectively, defined as
\begin{equation}
\mathrm{Ma}_t=\frac{\mathcal{U}'}{\langle c\rangle}=
\mathrm{M}\frac{\mathcal{U}'}{\langle\sqrt{T}\rangle},
\end{equation}

\begin{equation}
\mathrm{Re}_\lambda=\frac{\mathcal{U}'\lambda\langle\rho\rangle}{\langle\mu\rangle}=
\mathrm{Re}\frac{\mathcal{U}'\lambda\langle\rho\rangle}{\sqrt{3}\langle\mu\rangle},
\end{equation}
where the root-mean square (RMS) of the velocity magnitude fluctuations is computed as
$\mathcal{U}'=\langle\mathcal{U}_i\mathcal{U}_i/3\rangle^{1/2}$, and the normalized Taylor
micro--scale $\lambda$ is defined by
\begin{equation}
\lambda=\sqrt{\frac{\mathcal{U}^{'2}}{\left\langle
\left(\frac{\partial\mathcal{U}_1}{\partial x_1}\right)^2+\left(\frac{\partial\mathcal{U}_2}
{\partial x_2}\right)^2+\left(\frac{\partial\mathcal{U}_3}{\partial x_3}\right)^2\right\rangle}},
\end{equation}
where $\langle\cdot\rangle$ denotes a volume average over the computational domain, $\Omega$.
The simulation domain has extent $[0,2\pi]\times[0,2\pi]\times[0,2\pi]$.
The parameter values used here have been investigated by \citet{samtaney2001direct} using DNS with a
tenth-order accurate Pad\`{e} scheme.

Periodic boundary conditions are adopted in all three coordinate directions.
The initial hydrodynamic field is divergence-free and has an energy spectrum given by
\begin{equation}
E(k)=A_0 k^4\exp\left(-2k^2/k_0^2\right),
\end{equation}
where $k$, $k_0$, and $A$ are the wave number, the spectrum's peak wave number, and the constant
chosen to get the specified initial turbulent kinetic energy, respectively.
Here, we set $k_0=8$ and $A_0=\num[scientific-notation=true]{0.00013}$.
The thermodynamic field is specified in the same manner as the cases $D1$ and $D5$ presented
in \cite{samtaney2001direct,boukharfane2018evolution}.
For the decaying of compressible turbulence, as time evolves, both $\mathrm{M}_t$ and
$\mathrm{Re}_\lambda$ decrease.
Therefore, the flow fields are smooth without strong shocklets. 
The classical RK(4,4) method with relaxation is used for the time integration.

\Cref{fig:hit}
shows the decaying history of the resolved turbulent kinetic energy
$\mathcal{E}_K=1/2\langle\rho\mathcal{U}_i\mathcal{U}_i\rangle$ and the Reynolds number based on
Taylor micro--scale versus $t/\tau$.
Herein,
$\tau=\mathcal{L}_1/\mathcal{U}'$ is the initial large-eddy-turnover time with $\mathcal{L}_1$
being the integral length scale defined as
\begin{equation}
\mathcal{L}_1=
\frac{3\pi}{4}\frac{\int_0^\infty\frac{E(k)}{k}\mathrm{d}k}{\int_0^\infty E(k)\mathrm{d}k}.
\end{equation}
Except for slight under-predictions for $t/\tau<1$ due to the initialization, we
observe a very good agreement between the SSDC solutions computed with $16^3$ hexahedral
elements and $p=5$ and the reference
solution of \citet{samtaney2001direct}. The latter is computed using
a grid with $128^3$ points.

\begin{figure}[htb!]
\centering
  \begin{subfigure}[b]{0.49\textwidth}
  \centering
    \includegraphics[width=\textwidth]{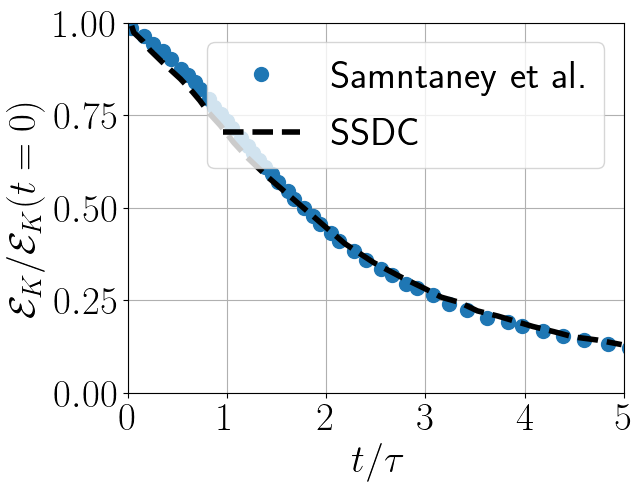}
    \caption{Turbulent kinetic energy.}
    \label{fig:hit_te}
  \end{subfigure}%
  \hspace*{\fill}
  \begin{subfigure}[b]{0.47\textwidth}
  \centering
    \includegraphics[width=\textwidth]{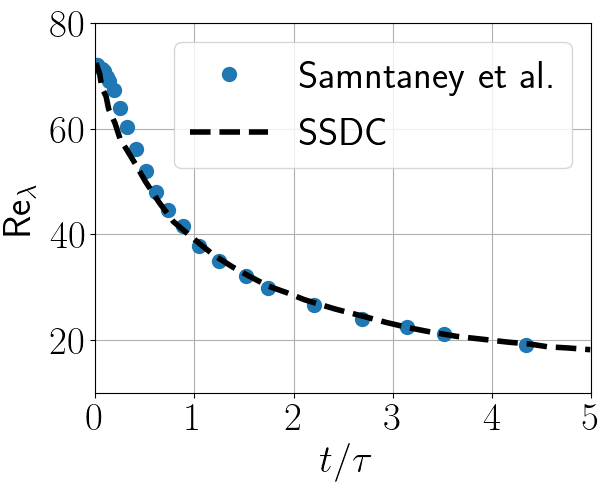}
    \caption{Reynolds number based on Taylor micro-scale.}
    \label{fig:hit_ret}
  \end{subfigure}%
  \caption{Compressible isotropic turbulence at Ma$_t = 0.3$ and Re$_{\lambda} = 72$.}
  \label{fig:hit}
\end{figure}

\section{Summary and conclusions}
\label{sec:summary}

We have proposed, analyzed, and developed the general framework of
local relaxation time integration schemes in the setting of ordinary
differential equations. This extension of the recent relaxation approach
to numerical time integration schemes enables to guarantee entropy
inequalities for finitely many convex local entropies while preserving all
linear invariants.
While we have applied the theory to local inequalities of a single entropy,
it extends directly without further modifications to finitely many local
or global inequalities of different convex entropies.

This approach has been applied to entropy conservative and dissipative
semidiscretizations of the compressible Euler and Navier--Stokes equations.
The resulting conservative fully discrete schemes guarantee local entropy
inequalities without adding superfluous artificial dissipation. This is
possible to accomplish in the relaxation framework because the amount of dissipation
in time is adapted to the spatial dissipation a posteriori instead of
a priori. Hence, local relaxation methods do not degrade the solution
accuracy. Instead, they can also remove superfluous dissipation if necessary.

In general, baseline RK schemes can guarantee neither entropy conservation nor
entropy dissipation. Global relaxation methods can enforce both conservation
and dissipation of a single global convex entropy. By modifying the relaxation
parameter slightly, global relaxation methods can even introduce some dissipation
in time if entropy conservative spatial semidiscretizations are employed.
In contrast, local relaxation methods impose local entropy inequalities.
These can be summed up to get a global entropy inequality, but a global
entropy equality is impossible in general. On the other hand, local entropy
inequalities are the more important tools to obtain bounds, estimates, and
further results.

This novel technique, combined with the entropy conservative/dissipative
spatial semidiscretizations employed in this article, yields the first
discretization for compressible computational fluid dynamics that is
\begin{itemize}
  \item
  primary conservative,

  \item
  locally entropy dissipative in the fully discrete sense with
  $\dt = \O( \Delta x )$,

  \item
  explicit, except for the solution of a scalar equation per time step
  and local entropy,

  \item
  and arbitrarily high-order accurate in space and time.
\end{itemize}

Since the scalar equations that must be solved for each local
relaxation parameter are fully localized to the elements, they can
be parallelized efficiently.

Here, we have used relaxation methods to guarantee fully discrete local
entropy inequalities for discontinuous collocation methods (mass lumped
discontinuous Galerkin finite element methods).
Applications of these methods and other means to guarantee local entropy
inequalities for fully discrete numerical methods will be
further studied in the future.

\appendix
\section*{Acknowledgments}

The research reported in this publication was supported by the
King Abdullah University of Science and Technology (KAUST).
We are thankful for the computing resources of the
Supercomputing Laboratory and the Extreme Computing Research Center
at KAUST.

\AtNextBibliography{\small}
\printbibliography

\end{document}